\theoremstyle{plain}
\newtheorem{theorem}{\bf Theorem}
\newtheorem{proposition}[theorem]{\bf Proposition}
\newtheorem{corollary}[theorem]{\bf Corollary}
\newtheorem{lemma}[theorem]{\bf Lemma}
\theoremstyle{definition}
\newenvironment{remark}[1][Remark.]{\begin{trivlist}
		\item[\hskip \labelsep {\bfseries #1}]}{\end{trivlist}}
\newenvironment{example}[1][Example.]{\begin{trivlist}
		\item[\hskip \labelsep {\bfseries #1}]}{\end{trivlist}}
\numberwithin{theorem}{section}
\numberwithin{equation}{section}
\newcommand{\FF}{{\mathbb F}}
\newcommand{\Rea}{{\mathbb R}}
\DeclareMathOperator{\Ker}{Ker}
\DeclareMathOperator{\Ima}{Im}
\newcommand{\lfrac}[2]{\left\lfloor \frac{#1}{#2}\right\rfloor}
\newcommand{\sumwedge}[2]{#2^{[#1]}}
\newcommand{\maxksum}[2]{\sum_{j=1}^{#1}\eiglargest{j}{#2}}
\newcommand{\ithmaxksum}[3]{S_{#1,#2}^{\downarrow}(#3)}
\newcommand{\ithminksum}[3]{S_{#1,#2}^{\uparrow}(#3)}
\newcommand{\lap}[2]{L_{#1}(#2)}
\newcommand{\lapw}[3]{L_{#1}^{#3}(#2)}
\newcommand{\glapw}[2]{L^{#2}(#1)}
\newcommand{\simlapw}[2]{\mathcal{L}^{#2}(#1)}
\newcommand{\eigsmallest}[2]{\lambda_{#1}^{\uparrow}(#2)}
\newcommand{\eiglargest}[2]
{\lambda_{#1}^{\downarrow}(#2)}
\begin{document}

\begin{frontmatter}[classification=text]

\title{Laplacian eigenvalues of independence complexes via additive compound matrices} 

\author[pgom]{Alan Lew}

\begin{abstract}
  The independence complex of a graph $G=(V,E)$ is the simplicial complex $I(G)$ on vertex set $V$ whose simplices are the independent sets in $G$. We present new lower bounds on the eigenvalues of the $k$-dimensional Laplacian   $L_k(I(G))$ in terms of the eigenvalues of the graph Laplacian $L(G)$.
  As a consequence, we show that for all $k\geq 0$, the dimension of the $k$-th reduced homology group (with real coefficients) of $I(G)$  is at most
 \[  \left| \left\{ 1\leq i_1<\cdots<i_{k+1}\leq |V| : \, \lambda_{i_1}+\lambda_{i_2}+\cdots+\lambda_{i_{k+1}} \geq |V|\right\}\right|,
  \]
where $\lambda_1\geq\lambda_2\geq  \cdots\geq \lambda_{|V|}=0$ are the eigenvalues of $L(G)$. In particular, if $k$ is the minimal number such that the sum of the $k$ largest eigenvalues of $L(G)$ is at least $|V|$, then $\tilde{H}_i(I(G);\mathbb{R})=0$ for all $i\leq k-2$.
   This extends previous results by Aharoni, Berger and Meshulam. 
   Our proof relies on a relation between the $k$-dimensional Laplacian $L_k(I(G))$ and the $(k+1)$-th additive compound matrix of $L_0(I(G))$, which is an $\binom{n}{k+1}\times\binom{n}{k+1}$ matrix whose eigenvalues are all the possible sums of $k+1$ eigenvalues of the $0$-dimensional Laplacian. Our results apply  also in the more general setting of vertex-weighted Laplacian matrices. 
\end{abstract}
\end{frontmatter}


\section{Introduction}

Let $G=(V,E)$ be a graph, and let $w:V\to\Rea_{\geq 0}$ be a weight function on $V$. We say that $w$ is \emph{positive} if $w(v)>0$ for all $v\in V$. The \emph{vertex-weighted Laplacian} on $G$  is the matrix $L^w(G)\in \Rea^{V\times V}$ defined by
\begin{equation}\label{eq:glap_def}
    \glapw{G}{w}_{u,v}=\begin{dcases}
    \sum_{u'\in N_G(u)} w(u') & \text{if } u=v,\\
    -w(v) & \text{if } \{u,v\}\in E,\\
    0 & \text{ otherwise,}
    \end{dcases}
\end{equation}
for all $u,v\in V$, where $N_G(u)$ denotes the set of neighbors of $u$ in $G$.
In the special case where $w(v)=1$ for all $v\in V$, we obtain $L^w(G)=L(G)$, the combinatorial Laplacian matrix on $G$. Note that $L^{w}(G)$ is not in general a symmetric matrix. However, assuming that $w$ is positive,  
$\glapw{G}{w}$ is similar to the symmetric matrix $\mathcal{L}^w(G)\in \Rea^{V\times V}$ defined by
\begin{equation}\label{eq:simlap}
    \mathcal{L}^w(G)_{u,v}=\begin{dcases}
    \sum_{u'\in N_G(u)} w(u') & \text{if } u=v,\\
    -\sqrt{w(u) w(v)} & \text{if } \{u,v\}\in E,\\
    0 & \text{ otherwise,}
    \end{dcases}
\end{equation}
for all $u,v\in V$. Indeed, we can write $\glapw{G}{w}=W^{-1/2}\simlapw{G}{w}  W^{1/2}$, where $W$ is the diagonal matrix with elements $W_{u,u}= w(u)$ for all $u\in V$ (see \cite{chung1996combinatorial}). If $w$ is non-negative, then we can write $\glapw{G}{w}=\lim_{\epsilon\to 0} \glapw{G}{w_{\epsilon}}$ and $\simlapw{G}{w}=\lim_{\epsilon\to 0} \simlapw{G}{w_{\epsilon}}$, where for all $\epsilon>0$ we define $w_{\epsilon}:V\to \Rea_{>0}$ by $w_{\epsilon}(v)=w(v)$ if $w(v)>0$ and $w_{\epsilon}(v)=\epsilon$ if $w(v)=0$, for all $v\in V$. Therefore, since $\glapw{G}{w_{\epsilon}}$ is similar to $\simlapw{G}{w_{\epsilon}}$ for all $\epsilon>0$, and by the continuity of eigenvalues, $\glapw{G}{w}$ and $\simlapw{G}{w}$ have the same eigenvalues also in this case.

The \emph{independence complex} of $G$ is the simplicial complex $I(G)$ on vertex set $V$ whose simplices are the independent sets in $G$. For $k\geq -1$, let $f_k(I(G))$ be the number of $k$-dimensional simplices in $I(G)$, let $C^k(I(G);\Rea)$ be the space of real valued $k$-cochains on $I(G)$, and let $d_k: C^k(I(G);\Rea)\to C^{k+1}(I(G);\Rea)$ be the $k$-th coboundary operator. A positive weight function $w:V\to \Rea_{>0}$ induces an inner product on $C^k(I(G);\Rea)$ (see Section \ref{sec:higher_laplacians}); let $d_k^*: C^{k+1}(I(G);\Rea)\to C^k(I(G);\Rea)$ be the adjoint of $d_k$ with respect to these inner products. The \emph{$w$-weighted $k$-dimensional Laplacian} on $I(G)$ is the linear operator \[\lapw{k}{I(G)}{w}= d_k^*d_k+d_{k-1}d_{k-1}^*.\] In the special case where $w(v)=1$ for all $v\in V$, we write $\lapw{k}{I(G)}{w}=\lap{k}{I(G)}$.
The simplicial Hodge theorem states that for every $w:V\to\Rea_{>0}$, $\Ker(\lapw{k}{I(G)}{w})$ is isomorphic to the $k$-th homology group $\tilde{H}_k(I(G);\Rea)$. The definition of $\lapw{k}{I(G)}{w}$ can be extended to the case of non-negative weight functions $w$, in which case we have $\dim(\tilde{H}_k(I(G);\Rea))\leq \dim(\Ker(\lapw{k}{I(G)}{w}))$ (see Lemma \ref{lemma:homology_from_eigenvalues_degenerate_case}).

For a matrix $M\in \Rea^{n\times n}$ with real eigenvalues and $1\leq i\leq n$, we denote by $\eigsmallest{i}{M}$ the $i$-th smallest eigenvalue of $M$ and by $\eiglargest{i}{M}$ its $i$-th largest eigenvalue (so that $\eiglargest{i}{M}=\eigsmallest{n+1-i}{M}$). For any $k$, let
\[
\mathcal{S}_k(M)=\left\{ \sum_{i\in I} \eigsmallest{i}{M} :\, I\in \binom{[n]}{k}\right\}
\]
be the multiset consisting of all possible sums of $k$ eigenvalues of $M$. Let $\ithminksum{k}{i}{M}$ be the $i$-th smallest element of $\mathcal{S}_k(M)$, and $\ithmaxksum{k}{i}{M}$ be its $i$-th largest element.

In \cite{aharoni2005eigenvalues}, Aharoni, Berger and Meshulam studied the relation between the smallest eigenvalues of successive high dimensional Laplacians of $I(G)$.
As a principal consequence, they obtained the following result relating the minimal eigenvalue of $\lap{k}{I(G)}$ to the maximal eigenvalue of $L(G)$.

\begin{theorem}[Aharoni, Berger, Meshulam {\cite{aharoni2005eigenvalues}}]
\label{thm:ABM_main}
Let $G=(V,E)$ be a graph. Then
\[
\eigsmallest{1}{\lap{k}{I(G)}}\geq |V|-(k+1)\eiglargest{1}{L(G)}.
\]
\end{theorem}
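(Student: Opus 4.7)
The plan is to embed $C^k(I(G))$ into $\bigwedge^{k+1}\Rea^V$ and show that $\lap{k}{I(G)}$ dominates, in the positive semidefinite order, a principal submatrix of $|V|\cdot I - L(G)^{[k+1]}$, where $L(G)^{[k+1]}$ is the $(k+1)$-th additive compound of the graph Laplacian. Combined with Cauchy's interlacing theorem and the fact that the eigenvalues of $L(G)^{[k+1]}$ are the $\binom{|V|}{k+1}$ sums of $k+1$ eigenvalues of $L(G)$, this will immediately yield the bound.

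Concretely, identify each independent set $\tau=\{v_0<\cdots<v_k\}$ in $I(G)$ with the wedge basis element $e_\tau:=e_{v_0}\wedge\cdots\wedge e_{v_k}$, so that $C^k(I(G);\Rea)$ becomes a coordinate subspace of $\bigwedge^{k+1}\Rea^V$. The first step is to compare the matrix entries of $\lap{k}{I(G)}$ and of $M:=|V|\cdot I - L(G)^{[k+1]}$ restricted to this subspace. For distinct $\tau,\tau'\in I(G)$ differing in a single vertex, say $\tau'=(\tau\setminus\{v_i\})\cup\{w\}$ with $w$ in position $j$ of sorted $\tau'$, the Hodge formula combined with Garland's cancellation gives $(\lap{k}{I(G)})_{\tau,\tau'}=(-1)^{i+j}$ when $w\sim_G v_i$ (so $\tau\cup\tau'\notin I(G)$) and $0$ otherwise. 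Expanding $L(G)^{[k+1]}$ directly on the wedge basis and using $(L(G))_{w,v_i}=-1$ exactly when $w\sim_G v_i$, together with the wedge-reordering sign $(-1)^{i+j}$, shows that $M_{\tau,\tau'}$ takes the same value under the same condition. For the diagonal, the Hodge formula yields $(\lap{k}{I(G)})_{\tau,\tau}=(k+1)+|V|-|N_G[\tau]|$, while $M_{\tau,\tau}=|V|-\sum_{v\in\tau}\deg_G(v)$, and the union bound $|N_G[\tau]|\leq\sum_{v\in\tau}(1+\deg_G(v))$ gives $(\lap{k}{I(G)})_{\tau,\tau}\geq M_{\tau,\tau}$. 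Hence $\lap{k}{I(G)}-M|_{C^k(I(G))}$ is a non-negative diagonal matrix, so $\lap{k}{I(G)}\succeq M|_{C^k(I(G))}$.

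To finish, Cauchy's interlacing theorem for principal submatrices gives $\eigsmallest{1}{M|_{C^k(I(G))}}\geq \eigsmallest{1}{M}=|V|-\eiglargest{1}{L(G)^{[k+1]}}$. The additive-compound identity then yields $\eiglargest{1}{L(G)^{[k+1]}}=\sum_{i=1}^{k+1}\eiglargest{i}{L(G)}\leq(k+1)\eiglargest{1}{L(G)}$. Chaining the inequalities, $\eigsmallest{1}{\lap{k}{I(G)}}\geq|V|-(k+1)\eiglargest{1}{L(G)}$, as claimed.

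The main obstacle I anticipate is the sign bookkeeping in the off-diagonal comparison: one must align the coboundary sign $(-1)^i$ in $d_{k-1}$, Garland's cancellation between the up- and down-Laplacian contributions on upper-adjacent pairs, and the wedge-reordering sign that appears when expanding $L(G)^{[k+1]}e_\tau$. These three sources must combine so that the surviving factor is $(-1)^{i+j}$ on both sides, with the $-1$ from $(L(G))_{w,v_i}$ being precisely cancelled by the minus sign in $|V|\cdot I-L(G)^{[k+1]}$. I would handle this by writing out the two calculations side-by-side on a generic pair $\tau,\tau'$ and checking the two resulting sign tables match.
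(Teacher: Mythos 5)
Your proof is correct and follows the same strategy as the paper's: you compare $\lap{k}{I(G)}$ to the principal submatrix of a shifted $(k+1)$-th additive compound of $L(G)$, show the difference is a non-negative diagonal matrix (which is exactly the content of the paper's Lemma \ref{lemma:degree_sum} after the shift $\lapw{0}{I(G)}{w}=(\sum_v w(v))I-L^w(G)$ is applied), and then combine Cauchy interlacing with the spectral identity for additive compounds. The only cosmetic difference is that the paper routes through the clique complex of $\bar G$ and uses Weyl's inequality on the decomposition $L+R$, whereas you fold the $-k|V|$ shift into the comparison matrix $M=|V|\cdot I-L(G)^{[k+1]}$ and phrase it as a PSD domination, but these are the same estimate.
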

The \emph{homological connectivity} of $I(G)$, denoted by $\eta(I(G))$, is defined as the maximal $k$ such that $\tilde{H}_i(I(G);\Rea)=0$ for all $i\leq k-2$. The following result is an immediate consequence of Theorem \ref{thm:ABM_main}.

\begin{theorem}[Aharoni, Berger, Meshulam {\cite[Thm. 4.1]{aharoni2005eigenvalues}}]
\label{thm:ABM_connectivity}
Let $G=(V,E)$ be a graph. Then
\[
\eta(I(G))\geq \frac{|V|}{\eiglargest{1}{L(G)}}.
\]
\end{theorem}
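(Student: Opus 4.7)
The plan is to derive Theorem \ref{thm:ABM_connectivity} as a direct corollary of Theorem \ref{thm:ABM_main} by converting the quantitative spectral lower bound into a qualitative vanishing statement for the reduced homology groups, using the simplicial Hodge theorem as the bridge.

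First, fix $i \geq 0$ and set $\lambda = \eiglargest{1}{L(G)}$. Applying Theorem \ref{thm:ABM_main} with $k$ replaced by $i$ yields
\[ \eigsmallest{1}{\lap{i}{I(G)}} \geq |V| - (i+1)\lambda. \]
Whenever $(i+1)\lambda < |V|$, the right-hand side is strictly positive, so $\lap{i}{I(G)}$ has trivial kernel. Invoking the simplicial Hodge theorem (stated just before Theorem \ref{thm:ABM_main} in the excerpt) then gives $\tilde{H}_i(I(G);\Rea) \cong \Ker(\lap{i}{I(G)}) = 0$.

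Next, I would translate this conclusion into a statement about $\eta(I(G))$. The condition $(i+1)\lambda < |V|$ is equivalent to $i < |V|/\lambda - 1$, so $\tilde{H}_i(I(G);\Rea) = 0$ holds for every integer $i$ with $i + 1 < |V|/\lambda$. By the definition of homological connectivity, $\eta(I(G))$ is at least any integer $k$ for which this vanishing holds for all $i \leq k-2$, i.e.\ any $k$ with $k-1 < |V|/\lambda$, equivalently $k \leq \lceil |V|/\lambda \rceil$. Therefore
\[ \eta(I(G)) \geq \lceil |V|/\lambda \rceil \geq |V|/\lambda, \]
as claimed.

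There is no real obstacle here; the only thing to be careful about is the off-by-one arithmetic that relates the index shift $i \leftrightarrow k-2$ in the definition of $\eta$ to the inequality $(i+1)\lambda < |V|$ coming out of Theorem \ref{thm:ABM_main}. Once this bookkeeping is set up correctly, the argument is a one-line application of the spectral bound combined with Hodge theory.
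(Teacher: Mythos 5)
Your proof is correct and matches the route the paper has in mind: Theorem \ref{thm:ABM_connectivity} is stated as an immediate consequence of Theorem \ref{thm:ABM_main}, obtained exactly by combining the spectral lower bound with the simplicial Hodge theorem and unwinding the off-by-one arithmetic in the definition of $\eta$. (The paper also notes, after Corollary \ref{cor:connectivity}, a second derivation of this bound via the new results, but the direct deduction you give is the one implicitly intended here.)
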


Here, we continue the study of the spectra of high dimensional Laplacian operators on independence complexes. Our main result is the following extension of Theorem \ref{thm:ABM_main}.

\begin{theorem}  \label{thm:main_independence}
Let $G=(V,E)$ be a graph, and let $w:V\to \Rea_{\geq 0}$. Then, for all $k\geq 0$ and $1\leq i\leq f_k(I(G))$, 
\[
    \eigsmallest{i}{\lapw{k}{I(G)}{w}}\geq \left(\sum_{v\in V} w(v)\right)-\ithmaxksum{k+1}{i}{L^{w}(G)}.
\]
\end{theorem}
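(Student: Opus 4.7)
The plan is to compare $\lapw{k}{I(G)}{w}$, viewed as living inside the $k$-Laplacian on the full simplex $\Delta^V$, with the restriction of the $(k+1)$-th additive compound of $L^w(G)$ to the same subspace. Under the identification $C^k(\Delta^V;\Rea)\cong \Lambda^{k+1}(\Rea^V)$, the compound $L^w(G)^{[k+1]}$ acts on the full exterior power, while $C^k(I(G);\Rea)$ sits as the coordinate subspace spanned by the wedges $e_{v_0}\wedge\cdots\wedge e_{v_k}$ for independent sets $\{v_0,\ldots,v_k\}$; its eigenvalues (as a self-adjoint operator in the weighted inner product) are exactly the multiset $\mathcal{S}_{k+1}(L^w(G))$. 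The key identity underlying everything is $\lapw{k}{\Delta^V}{w}=sI$ on $C^k(\Delta^V)$, where $s=\sum_{v\in V}w(v)$; this extends the unweighted fact that a full simplex has constant Laplacian spectrum and can be verified directly from the definitions using the product-weighted inner product on cochains.

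Next I would set up the subcomplex-projection identity. Writing $\iota\colon C^k(I(G))\hookrightarrow C^k(\Delta^V)$ and $\pi$ for its adjoint restriction, and letting $P_{k+1}^{\mathrm{out}}$ be the orthogonal projection onto cochains supported on $(k+1)$-subsets of $V$ that are \emph{not} independent in $G$, the relation
\[
\lapw{k}{\Delta^V}{w}\big|_{C^k(I(G))} - \lapw{k}{I(G)}{w} \;=\; \pi\,(d_k^{\Delta^V})^*\,P_{k+1}^{\mathrm{out}}\, d_k^{\Delta^V}\,\iota
\]
holds because the lower-Laplacian terms match exactly (every $(k-1)$-subface of an independent set is independent, so $(d_{k-1}^{\Delta^V})^*\iota$ already lands in $C^{k-1}(I(G))$), while the upper-Laplacian difference is precisely the projection onto the missing $(k+1)$-faces. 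Combined with $\lapw{k}{\Delta^V}{w}=sI$, this yields the PSD identity
\[
sI - \lapw{k}{I(G)}{w} \;=\; \pi\,(d_k^{\Delta^V})^*\,P_{k+1}^{\mathrm{out}}\, d_k^{\Delta^V}\,\iota.
\]

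The core technical step, and what I expect to be the main obstacle, is the matrix inequality
\[
\pi\,(d_k^{\Delta^V})^*\,P_{k+1}^{\mathrm{out}}\, d_k^{\Delta^V}\,\iota \;\preceq\; L^w(G)^{[k+1]}\big|_{C^k(I(G))},
\]
equivalently $\lapw{k}{I(G)}{w}+L^w(G)^{[k+1]}|_{C^k(I(G))}\succeq sI$. My plan is to decompose $L^w(G)=\sum_{e\in E}L^w_e$ into single-edge contributions and use the additivity of the compound, $L^w(G)^{[k+1]}=\sum_{e\in E}(L^w_e)^{[k+1]}$, while the left-hand side decomposes as a sum of rank-one PSD contributions $(\pi(d_k^{\Delta^V})^*e_\mu)(\pi(d_k^{\Delta^V})^*e_\mu)^*$ indexed by the non-independent $(k+1)$-faces $\mu$. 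Each such $\mu$ contains at least one edge $e$, and I expect to match faces with edges so that, summed per edge, the projected contributions are dominated by $(L^w_e)^{[k+1]}|_{C^k(I(G))}$; the diagonal comparison reduces to the weighted union bound $\sum_{u\in N_G(\sigma)}w(u)\leq\sum_{v\in\sigma}\sum_{u\in N_G(v)}w(u)$, while matching off-diagonals requires the bookkeeping of coboundary incidence signs against wedge-derivation permutation signs, which is the delicate part.

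Granting the core inequality, the desired spectral bound follows by two classical steps. Applying Weyl's inequality to $A=L^w(G)^{[k+1]}|_{C^k(I(G))}$ and $B=\lapw{k}{I(G)}{w}$ gives $\eiglargest{i}{A}+\eigsmallest{i}{B}\geq\eigsmallest{1}{A+B}\geq s$, hence $\eigsmallest{i}{\lapw{k}{I(G)}{w}}\geq s-\eiglargest{i}{A}$. Cauchy interlacing for principal submatrices then yields $\eiglargest{i}{A}\leq\eiglargest{i}{L^w(G)^{[k+1]}}=\ithmaxksum{k+1}{i}{L^w(G)}$, producing the claimed inequality. The extension from strictly positive to non-negative weights follows by the continuity of eigenvalues already noted in the preamble.
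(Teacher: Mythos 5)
Your setup is sound and, modulo phrasing, it is the same route as the paper: relate $\lapw{k}{I(G)}{w}$ to the $(k+1)$-th additive compound of a $V\times V$ matrix sitting on $\bigwedge^{k+1}\Rea^V$, pass to the principal submatrix indexed by independent $(k+1)$-sets, and finish with Weyl's inequality plus Cauchy interlacing. The projection identity
\[
\lapw{k}{\Delta^V}{w}\big|_{C^k(I(G))}-\lapw{k}{I(G)}{w}=\pi\,(d_k^{\Delta^V})^*\,P_{k+1}^{\mathrm{out}}\,d_k^{\Delta^V}\,\iota
\]
together with $\lapw{k}{\Delta^V}{w}=\bigl(\sum_{v}w(v)\bigr)I$ is correct, and the last two spectral steps (Weyl with indices $i$ and $n-i+1$, then interlacing) are valid. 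The paper works instead with the clique complex $X(\bar G)$ and the compound of the zero--Laplacian $\lapw{0}{I(G)}{w}=\bigl(\sum_v w(v)\bigr)I-L^w(G)$, but since $\bigl(sI-L^w(G)\bigr)^{[k+1]}=(k+1)sI-\bigl(L^w(G)\bigr)^{[k+1]}$, that is an algebraically equivalent packaging of the same comparison.

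The genuine gap is that you have not proved the central inequality
\[
\pi\,(d_k^{\Delta^V})^*\,P_{k+1}^{\mathrm{out}}\,d_k^{\Delta^V}\,\iota\ \preceq\ \bigl(L^w(G)\bigr)^{[k+1]}\big|_{C^k(I(G))},
\]
and the plan you sketch for it --- decompose $L^w(G)$ edge by edge, split the left side into rank-one pieces indexed by non-independent $(k+1)$-faces, assign faces to edges, and do sign bookkeeping to dominate per edge --- is both unverified and aimed at a non-existent difficulty. If you compute both matrices entrywise (compare Lemma~\ref{lemma:vertex_weighted_k_lap} for $\lapw{k}{I(G)}{w}$ with Theorem~\ref{thm:additive_compound_matrix} for the compound), you find that for $\sigma,\tau\in I(G)(k)$ with $|\sigma\cap\tau|=k$, $\sigma\setminus\tau=\{i\}$, $\tau\setminus\sigma=\{j\}$, both sides equal $-(-1)^{\epsilon(\sigma,\tau)}w(j)$ when $\{i,j\}\in E$ and $0$ otherwise, and both vanish when $|\sigma\cap\tau|<k$. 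In other words, the \emph{off-diagonal entries coincide identically}, so there is no sign matching to do at all; the difference of the two operators is a diagonal matrix, and the PSD comparison collapses to the single scalar inequality
\[
\sum_{u\in\bigcup_{v\in\sigma}N_G(v)}w(u)\ \leq\ \sum_{v\in\sigma}\sum_{u\in N_G(v)}w(u)\qquad\text{for every }\sigma\in I(G)(k),
\]
which is the weighted union bound you already identified (this is the content of Lemma~\ref{lemma:degree_sum}, applied to $\bar G$). So the missing step is not a delicate combinatorial matching but simply the entrywise computation that shows the off-diagonals agree; once you do that, your proof closes. As written, though, the proposal asserts the key inequality without a complete argument, and the argument you propose in its place would not obviously terminate.
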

Theorem \ref{thm:ABM_main} follows from the $i=1$ case of Theorem \ref{thm:main_independence}, using the fact that $\ithmaxksum{k+1}{1}{L(G)}\leq (k+1)\eiglargest{1}{L(G)}$ for all $k\geq 0$.  As a consequence of Theorem \ref{thm:main_independence}, we obtain:
\begin{theorem}\label{thm:bettinumbers_indepedence_complex}
    Let $G=(V,E)$ be a graph on $n$ vertices, and let $w:V\to \Rea_{\geq 0}$. Then, for all $k\geq 0$,
    \[
\dim(\tilde{H}_k(I(G);\Rea))
    \leq \left|\left\{ I\in\binom{[n]}{k+1} :\, \sum_{i\in I} \eiglargest{i}{\glapw{G}{w}}\geq \sum_{v\in V} w(v)\right\}\right|.
    \]
\end{theorem}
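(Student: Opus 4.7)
The plan is to combine Theorem \ref{thm:main_independence} with the Hodge-theoretic bound on Betti numbers and a short counting argument.

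First, by Lemma \ref{lemma:homology_from_eigenvalues_degenerate_case} we have
\[
\dim(\tilde{H}_k(I(G);\Rea))\leq \dim(\Ker(\lapw{k}{I(G)}{w})),
\]
so it suffices to bound the multiplicity of $0$ as an eigenvalue of $\lapw{k}{I(G)}{w}$. The operator $\lapw{k}{I(G)}{w}$ is similar to a real symmetric positive semi-definite matrix, and therefore its eigenvalues are all non-negative; so the dimension of its kernel equals the number of indices $i\in\{1,\dots,f_k(I(G))\}$ for which $\eigsmallest{i}{\lapw{k}{I(G)}{w}}\leq 0$.

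Next I would apply Theorem \ref{thm:main_independence}: for any such index $i$,
\[
0\geq \eigsmallest{i}{\lapw{k}{I(G)}{w}}\geq \left(\sum_{v\in V} w(v)\right)-\ithmaxksum{k+1}{i}{\glapw{G}{w}},
\]
forcing $\ithmaxksum{k+1}{i}{\glapw{G}{w}}\geq \sum_{v\in V} w(v)$. Since $f_k(I(G))\leq \binom{n}{k+1}$, the number of such indices is bounded by the number of $i\in\{1,\dots,\binom{n}{k+1}\}$ for which the $i$-th largest element of $\mathcal{S}_{k+1}(\glapw{G}{w})$ meets this threshold, i.e., by the number of elements of the multiset $\mathcal{S}_{k+1}(\glapw{G}{w})$ (counted with multiplicity) that are at least $\sum_{v\in V}w(v)$.

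Finally I would rewrite this multiset count as a count of $(k+1)$-subsets: using the identity $\eigsmallest{i}{\glapw{G}{w}}=\eiglargest{n+1-i}{\glapw{G}{w}}$ together with the involution $I\mapsto\{n+1-i:i\in I\}$ on $\binom{[n]}{k+1}$, one has
\[
\mathcal{S}_{k+1}(\glapw{G}{w})=\left\{\sum_{i\in I}\eiglargest{i}{\glapw{G}{w}}:I\in\binom{[n]}{k+1}\right\}
\]
as multisets, so the count becomes exactly $\left|\{I\in\binom{[n]}{k+1}:\sum_{i\in I}\eiglargest{i}{\glapw{G}{w}}\geq \sum_{v\in V}w(v)\}\right|$, as claimed. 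All the substantive work is packed into Theorem \ref{thm:main_independence}; the remaining steps are bookkeeping, with the only real point of care being that the Hodge-type bound still holds for possibly-degenerate non-negative weights, which is precisely the content of Lemma \ref{lemma:homology_from_eigenvalues_degenerate_case}.
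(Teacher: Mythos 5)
Your proof is correct and follows essentially the same route as the paper: it combines Theorem \ref{thm:main_independence} with Lemma \ref{lemma:homology_from_eigenvalues_degenerate_case}, and then reindexes the multiset $\mathcal{S}_{k+1}\left(\glapw{G}{w}\right)$ as sums of the $k+1$ largest eigenvalues over $(k+1)$-subsets; the paper simply runs this chain in the opposite direction, first naming the right-hand side $j$ and then invoking Theorem \ref{thm:main_independence} at index $j+1$. One phrasing to tighten: for merely non-negative $w$, the matrix $\lapw{k}{I(G)}{w}$ need not be similar to a symmetric matrix, so ``kernel dimension equals the number of indices with $\eigsmallest{i}{\lapw{k}{I(G)}{w}}\leq 0$'' should be weakened to the inequality $\dim\Ker\left(\lapw{k}{I(G)}{w}\right)\leq\left|\left\{i:\eigsmallest{i}{\lapw{k}{I(G)}{w}}\leq 0\right\}\right|$ (geometric at most algebraic multiplicity, all eigenvalues being non-negative limits of the positive-weight spectra), which is the only direction your argument uses.
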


In particular, we obtain the following bound on the homological connectivity of an independence complex.

\begin{corollary}\label{cor:connectivity}
   Let $G=(V,E)$ be a graph,  and let $w:V\to \Rea_{\geq 0}$. Then
   \[
    \eta(I(G))\geq  \min \left\{ m: \, \maxksum{m}{L^{w}(G)}\geq \sum_{v\in V} w(v)\right\}.
   \]
\end{corollary}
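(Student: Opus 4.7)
The plan is to deduce the corollary as a direct, essentially combinatorial consequence of Theorem \ref{thm:bettinumbers_indepedence_complex}. Let $m^{*} = \min\{ m : \maxksum{m}{L^{w}(G)} \geq \sum_{v\in V}w(v)\}$ and set $W = \sum_{v\in V}w(v)$ for brevity. Unwinding the definition of homological connectivity, it suffices to prove that $\tilde H_{k}(I(G);\Rea)=0$ for every $k \leq m^{*}-2$.

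Fix such a $k$, so that $k+1 \leq m^{*}-1$. By Theorem \ref{thm:bettinumbers_indepedence_complex},
\[
\dim(\tilde H_{k}(I(G);\Rea)) \leq \left|\left\{ I \in \binom{[n]}{k+1} : \sum_{i\in I} \eiglargest{i}{\glapw{G}{w}} \geq W \right\}\right|.
\]
I would argue that this set is empty. Indeed, for any $I\in\binom{[n]}{k+1}$ the sum $\sum_{i\in I}\eiglargest{i}{\glapw{G}{w}}$ is maximized by choosing $I=\{1,2,\ldots,k+1\}$, i.e.\ by picking the top $k+1$ eigenvalues; this simply expresses the monotonicity $\eiglargest{1}{M}\geq \eiglargest{2}{M}\geq\cdots$. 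Hence, for every admissible $I$,
\[
\sum_{i\in I}\eiglargest{i}{\glapw{G}{w}} \leq \maxksum{k+1}{L^{w}(G)}.
\]
Since $k+1 \leq m^{*}-1 < m^{*}$, the minimality of $m^{*}$ forces $\maxksum{k+1}{L^{w}(G)} < W$, so no $I$ can satisfy the inequality.

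Consequently $\dim(\tilde H_{k}(I(G);\Rea))=0$ for all $k \leq m^{*}-2$, which is precisely the statement $\eta(I(G))\geq m^{*}$. There is no real obstacle here: the entire content was already extracted in Theorem \ref{thm:bettinumbers_indepedence_complex}, and the only thing to verify is that the cardinality bound supplied by that theorem collapses to zero below the threshold index, which follows from the trivial observation that the largest sum of $k+1$ eigenvalues is achieved on the $k+1$ largest of them.
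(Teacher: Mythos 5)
Your proof is correct and takes essentially the same route as the paper: apply Theorem \ref{thm:bettinumbers_indepedence_complex}, observe that $\maxksum{k+1}{\glapw{G}{w}}$ is the maximum of $\sum_{i\in I}\eiglargest{i}{\glapw{G}{w}}$ over $(k+1)$-subsets $I$ of $[n]$, and conclude that the bounding set is empty whenever $k+1 < m^{*}$ by minimality of $m^{*}$. The one small omission is the degree $k=-1$ (relevant when $m^{*}\geq 1$), for which Theorem \ref{thm:bettinumbers_indepedence_complex} does not apply; there one uses instead that $\tilde{H}_{-1}(I(G);\Rea)=0$ automatically since $I(G)$ is nonvoid, a base case the paper treats explicitly.
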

Note that Corollary \ref{cor:connectivity} implies Theorem \ref{thm:ABM_connectivity}. Indeed, let $\ell$ be the minimum index such that $\maxksum{\ell}{L(G)}\geq |V|$. Then, since $|V|\leq \maxksum{\ell}{L(G)}\leq \ell\cdot \eiglargest{1}{L(G)}$,  we obtain, by Corollary \ref{cor:connectivity}, $\eta(I(G))\geq \ell \geq |V|/\eiglargest{1}{L(G)}$, recovering the bound in Theorem \ref{thm:ABM_connectivity}.

\begin{example}
    Let $G=(V,E)$ be a matching of size $r$, that is, the union of $r$ disjoint edges. Then $I(G)$ is an $(r-1)$-dimensional sphere, and it can be shown (for example using the formula for the Laplacian spectrum of the join of simplicial complexes; see e.g. \cite[Thm. 2.4]{lew2020spectral2}) that for all $0\leq k\leq r-1$ and $0\leq t\leq k+1$, $2(r-t)=|V|-2t$ is an eigenvalue of $\lap{k}{I(G)}$ with multiplicity $\binom{r}{k+1}\binom{k+1}{t}$. On the other hand, the eigenvalues of $L(G)$ are $0$ and $2$, both with multiplicity $r$, and therefore the multiset $\mathcal{S}_{k+1}(L(G))$ contains, for every $1\leq t\leq k+1$, the element $2t$ repeated $\binom{r}{t}\binom{r}{k+1-t}$ times. It can be then checked that, for all $0\leq k\leq r-1$ and $1\leq i\leq \binom{r}{k+1}+(k+1)\binom{r}{k+1}=(k+2)\binom{r}{k+1}$, we have $\eigsmallest{i}{\lap{k}{I(G)}}= |V|-\ithmaxksum{k+1}{i}{L(G)}$, obtaining equality in the inequality of Theorem \ref{thm:main_independence} in these cases. Moreover, we obtain from Theorem \ref{thm:bettinumbers_indepedence_complex} that $\tilde{H}_k(I(G);\Rea)=0$ for $k\leq r-2$ and $\dim(\tilde{H}_{r-1}(I(G);\Rea))\leq 1$, which are optimal. 
\end{example}

The proof of Theorem \ref{thm:ABM_main} in \cite{aharoni2005eigenvalues} relies on an adaptation of Garland's ``local to global" method (see \cite{garland1973p,ballmann1997l2,zuk1996property}), which in its original form relates between the eigenvalues of high dimensional Laplacians on a simplicial complex $X$ and the eigenvalues of lower dimensional Laplacians associated to certain subcomplexes of $X$. 
Here we follow a different approach. For an $m\times m$ matrix $M$, the \emph{$k$-th additive compound} of $M$ is an $\binom{m}{k}\times \binom{m}{k}$ matrix $M^{[k]}$ whose eigenvalues are all the possible sums of $k$ eigenvalues of $M$. That is, the spectrum of $M^{[k]}$ is exactly $\mathcal{S}_k(M)$ (see Section \ref{sec:compound} for more details). The proof of Theorem \ref{thm:main_independence} follows by determining a close relation between the weighted $k$-dimensional Laplacian of $I(G)$ and the $(k+1)$-th additive compound of its weighted $0$-dimensional Laplacian (which in turn is related to the graph Laplacian $\glapw{G}{w}$).

The homological connectivity of the independence complex $I(G)$ can be bounded from below by various domination and packing parameters of the graph $G=(V,E)$. For example:
\begin{itemize}[leftmargin=*]

\item Let $i\gamma(G)$ be the maximum, over all independent sets $I$ in $G$, of the minimal size of a set $S$ such that every vertex in $I$ is adjacent in $G$ to at least one vertex of $S$.  
Then $\eta(I(G))\geq i\gamma(G)$ (\cite{aharoni2000hall}, see also \cite{meshulam2001clique,meshulam2003domination}).
\item A set $S\subset V$ is called a \emph{neighborhood packing} if the distance in $G$ between every two vertices in $S$ is at least $3$ (equivalently, if the closed neighborhoods of all vertices in $S$ are pairwise disjoint). Let $\rho(G)$ be the maximal size of a neighborhood packing in $G$. It is easy to check that $\rho(G)\leq i\gamma(G)$, and therefore $\eta(I(G))\geq \rho(G)$ (see e.g. \cite{zewi2012vector,barmak2013star}).

\item A function $f:V\to \Rea_{\geq 0}$ is called \emph{star-dominating} (or weakly dominating) if $\deg(v)f(v)+\sum_{u\in N_G(v)} f(u)\geq 1$ for all $v\in V$. Let $\gamma_s^{*}(G)$ be the minimum of $\sum_{v\in V}f(v)$ over all star-dominating functions $f:V\to\Rea_{\geq 0}$. It was shown in \cite{meshulam2003domination} that $\eta(I(G))\geq \gamma_s^{*}(G)$.

\item Let $P:V\to \Rea^{\ell}$ for some $\ell\geq 1$. We say that $P$ is a \emph{vector representation} of $G$ if for all $u,v\in V$ we have $P(u)\cdot P(v)\geq 1$ if $\{u,v\}\in E$ and $P(u)\cdot P(v)\geq 0$ otherwise. A function $f:V\to \Rea_{\geq 0}$ is \emph{dominating for $P$} if $\sum_{v\in V} f(v) P(v)\cdot P(u)\geq 1$ for all $u\in V$. Let $|P|$ be the minimum of $\sum_{v\in V} f(v)$ over all dominating functions for $P$, and let $\Gamma(G)$ be the supremum of $|P|$ over all vector representations of $G$. In \cite{aharoni2005eigenvalues}, the bound $\eta(I(G))\geq \Gamma(G)$ was obtained as an application of Theorem \ref{thm:ABM_connectivity}.

Note that $\Gamma(G)\geq \gamma_s^{*}(G)$ (see \cite{aharoni2005eigenvalues}) and $\Gamma(G)\geq \rho(G)$ (see \cite{zewi2012vector}). This gives alternative proofs for the bounds $\eta(I(G))\geq \gamma_s^{*}(G)$ and $\eta(I(G))\geq \rho(G)$.

\end{itemize}

As a consequence of Theorem \ref{thm:bettinumbers_indepedence_complex} and Corollary \ref{cor:connectivity}, we obtain the following new results:

We say that a function $f:V\to \Rea_{\geq 0}$ is a \emph{fractional quadratic packing} if $\sum_{u\in N_G(v)} f(u)(f(u)+f(v))\leq 1$ for all $v\in V$. Let $\rho_q^{*}(G)$ be the supremum of $\sum_{v\in V} f(v)^2$ over all fractional quadratic packings in $G$. 
Note that the indicator function of a neighborhood packing in $G$ is a fractional quadratic packing, so $\rho_{q}^{*}(G)\geq \rho(G)$.

\begin{theorem}\label{thm:connectivity_from_quadratic_packing}
   Let $G=(V,E)$ be a graph. Then $\eta(I(G))\geq \rho_q^{*}(G)$.
\end{theorem}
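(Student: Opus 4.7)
The plan is to apply Corollary \ref{cor:connectivity} with a weight function built from the given packing. Given a fractional quadratic packing $f:V\to\Rea_{\geq 0}$, I would set $w(v)=f(v)^2$, so that $\sum_{v\in V}w(v)=\sum_{v\in V}f(v)^2$. The aim is to show that every $m$ with $\maxksum{m}{\glapw{G}{w}}\geq \sum_{v\in V}w(v)$ satisfies $m\geq \sum_{v\in V}f(v)^2$; Corollary \ref{cor:connectivity} will then give $\eta(I(G))\geq \sum_{v\in V}f(v)^2$, and passing to the supremum over $f$ completes the proof.

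Since $\glapw{G}{w}$ and $\simlapw{G}{w}$ share the same spectrum, it suffices to work with the symmetric matrix $\simlapw{G}{w}$. With $w=f^2$ and $f\geq 0$ we have $\sqrt{w(u)w(v)}=f(u)f(v)$, so a short computation gives
\[
y^T\simlapw{G}{w}y = \sum_{\{u,v\}\in E}\bigl(f(u)y_v-f(v)y_u\bigr)^2 \qquad\text{for all } y\in\Rea^V,
\]
which in particular confirms that $\simlapw{G}{w}$ is positive semidefinite. The key step is a Gershgorin disk bound: for each $v\in V$,
\[
\simlapw{G}{w}_{v,v}+\sum_{u\neq v}\bigl|\simlapw{G}{w}_{u,v}\bigr|=\sum_{u\in N_G(v)} f(u)\bigl(f(u)+f(v)\bigr)\leq 1,
\]
where the final inequality is precisely the defining condition of a fractional quadratic packing. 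Consequently every eigenvalue of $\simlapw{G}{w}$---and hence of $\glapw{G}{w}$---lies in $[0,1]$.

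It follows that $\maxksum{m}{\glapw{G}{w}}\leq m$ for every $m\geq 1$, so the inequality $\maxksum{m}{\glapw{G}{w}}\geq \sum_{v\in V}w(v)=\sum_{v\in V}f(v)^2$ forces $m\geq \sum_{v\in V}f(v)^2$. Combined with Corollary \ref{cor:connectivity} and the passage to the supremum over $f$, this produces the desired bound $\eta(I(G))\geq \rho_q^*(G)$. The only nontrivial observation is that the fractional quadratic packing inequality is exactly the Gershgorin row-sum bound on $\simlapw{G}{w}$ when $w=f^2$; once this is noticed, the rest is an immediate consequence of Corollary \ref{cor:connectivity}, and no additional machinery is required.
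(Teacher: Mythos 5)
Your proposal is correct and follows essentially the same route as the paper: set $w=f^2$, apply Ger\v{s}gorin's theorem to $\simlapw{G}{w}$ to see that the defining inequality of a fractional quadratic packing is exactly the bound forcing $\eiglargest{1}{\simlapw{G}{w}}\leq 1$, hence $\maxksum{m}{\glapw{G}{w}}\leq m$, and then invoke Corollary~\ref{cor:connectivity} and take the supremum over $f$. The only cosmetic difference is that you additionally verify positive semidefiniteness via the quadratic form, which is not needed for the argument (the bound $\maxksum{m}{M}\leq m\,\eiglargest{1}{M}$ already suffices).
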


 We also obtain the following extension of the bound $\eta(I(G))\geq \Gamma(G)$ from \cite{aharoni2005eigenvalues}.

\begin{theorem}
    \label{thm:homology_from_vector_reps}
Let $G=(V,E)$ be a graph, and let $P:V\to\Rea^{\ell}$ be a vector representation of $G$. Then, for every $f:V\to \Rea_{\geq 0}$, 
\[
\dim(\tilde{H}_k(I(G);\Rea))\leq \left|\left\{ I\in \binom{V}{k+1} :\, \sum_{u\in I} P(u)\cdot \sum_{v\in V}f(v) P(v)  \geq \sum_{v\in V}f(v) \right\}\right|.
\]
\end{theorem}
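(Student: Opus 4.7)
The plan is to derive Theorem \ref{thm:homology_from_vector_reps} from Theorem \ref{thm:bettinumbers_indepedence_complex} applied with weight function $w = f$. Setting $g(u) := P(u)\cdot \sum_{v\in V} f(v) P(v)$, and writing $g_{(i)}$ for the $i$-th largest value in the multiset $\{g(u) : u\in V\}$, the reduction is to prove the eigenvalue bound $\eiglargest{i}{\glapw{G}{f}} \leq g_{(i)}$ for every $i$. Once this is established, any index set $I\in\binom{[n]}{k+1}$ with $\sum_{i\in I}\eiglargest{i}{\glapw{G}{f}}\geq \sum_{v} f(v)$ yields, via the obvious bijection between rank-subsets of $[n]$ and vertex-subsets of $V$ obtained by a decreasing rearrangement of $g$, a vertex set $J\in\binom{V}{k+1}$ with $\sum_{u\in J}g(u)\geq \sum_{v} f(v)$, so the count produced by Theorem \ref{thm:bettinumbers_indepedence_complex} is at most the count appearing in Theorem \ref{thm:homology_from_vector_reps}.

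To prove the eigenvalue inequality, I would introduce the symmetric auxiliary matrix
\[
L'' := \mathrm{diag}(g) - F^{1/2} P P^{T} F^{1/2},
\]
where $F = \mathrm{diag}(f)$ and, by abuse of notation, $P$ also denotes the $|V|\times \ell$ matrix whose $v$-th row is $P(v)^{T}$. Since $F^{1/2} P P^{T} F^{1/2}$ is a Gram matrix and hence positive semidefinite, we have $L'' \preceq \mathrm{diag}(g)$ in the PSD order, giving $\eiglargest{i}{L''} \leq g_{(i)}$ via Courant--Fischer. The heart of the argument is the reverse sandwich $\simlapw{G}{f} \preceq L''$, where $\simlapw{G}{f}$ is the symmetrized weighted graph Laplacian defined in \eqref{eq:simlap}. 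I would verify this through the explicit identity
\[
\phi^{T}(L''-\simlapw{G}{f})\phi = \tfrac{1}{2}\sum_{u\neq v}\bigl(P(u)\cdot P(v) - \mathbf{1}[\{u,v\}\in E]\bigr)\bigl(\sqrt{f(v)}\phi(u) - \sqrt{f(u)}\phi(v)\bigr)^{2},
\]
obtained by expanding $\phi^{T}\simlapw{G}{f}\phi = \tfrac{1}{2}\sum_{u\neq v}\mathbf{1}[\{u,v\}\in E]\bigl(\sqrt{f(v)}\phi(u)-\sqrt{f(u)}\phi(v)\bigr)^{2}$ (the standard weighted Dirichlet energy) together with its analogue for $L''$, in which the indicator is replaced by $P(u)\cdot P(v)$. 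The defining inequality $P(u)\cdot P(v)\geq \mathbf{1}[\{u,v\}\in E]$ of a vector representation makes each summand non-negative, giving the desired PSD inequality.

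Combining the two bounds yields $\eiglargest{i}{\simlapw{G}{f}}\leq \eiglargest{i}{L''}\leq g_{(i)}$, and since $\glapw{G}{f}$ and $\simlapw{G}{f}$ have the same spectrum (directly when $f$ is positive, and by the continuity argument from the introduction when $f$ is only non-negative), we conclude $\eiglargest{i}{\glapw{G}{f}}\leq g_{(i)}$, closing the proof. The main obstacle I expect is the bookkeeping in the quadratic-form identity: both $\phi^{T}\simlapw{G}{f}\phi$ and $\phi^{T}L''\phi$ expand as double sums over ordered pairs $(u,v)$, and one has to correctly symmetrize the off-diagonal pieces, check that the diagonal contributions $u=v$ vanish because they sit inside squared expressions of the form $(\sqrt{f(u)}\phi(u)-\sqrt{f(u)}\phi(u))^{2}$, and track the factor of $\tfrac{1}{2}$; once this is set up, the comparison reduces cleanly to the pointwise inequality on $P(u)\cdot P(v)$.
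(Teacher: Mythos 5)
Your proof is correct and is essentially the paper's own argument in different notation: your $L''=\mathrm{diag}(g)-F^{1/2}PP^TF^{1/2}$ is exactly the paper's matrix $M$, your $\mathrm{diag}(g)$ is the paper's diagonal matrix $T$, your Gram factor $F^{1/2}PP^TF^{1/2}$ is the paper's $\tilde P^T\tilde P$, and your quadratic-form identity for $\phi^T(L''-\simlapw{G}{f})\phi$ is precisely the inequality the paper derives to show $Q=M-\simlapw{G}{f}\succeq 0$. The only cosmetic difference is that you invoke Courant--Fischer monotonicity where the paper cites Weyl's inequality (Lemma \ref{lemma:weyl}), and you split the sandwich $\simlapw{G}{f}\preceq L''\preceq\mathrm{diag}(g)$ into two steps while the paper combines the two PSD perturbations into a single application of Weyl.
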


The paper is organized as follows. In Section \ref{sec:prel} we provide the necessary background on matrix eigenvalues, additive compound matrices and high dimensional Laplacians. Section \ref{sec:main} contains the proof of our main result, Theorem \ref{thm:main_independence}, and its corollaries, Theorem \ref{thm:bettinumbers_indepedence_complex} and Corollary \ref{cor:connectivity}. In Section \ref{sec:applications} we present the proofs of Theorems \ref{thm:connectivity_from_quadratic_packing} and \ref{thm:homology_from_vector_reps}. 
Finally, in Section \ref{sec:additional}, as a simple additional application of additive compound matrices, we present some upper bounds on the sum of the $k$ largest eigenvalues of the Laplacian and adjacency matrices of a graph. 

\section{Preliminaries}\label{sec:prel}

\subsection{Matrix eigenvalues}

The following inequality due to Weyl gives useful bounds for the spectrum of the sum of two symmetric matrices.

\begin{lemma}[See e.g. {\cite[Thm 2.8.1]{brouwer2011spectra}}]\label{lemma:weyl}
Let $A, B$ be real symmetric matrices of size $n\times n$. Then, for all $1\leq i\leq n$,
\[
   \eigsmallest{i}{A+B}\geq \eigsmallest{i}{A}+\eigsmallest{1}{B},
\]
or, equivalently,
\[
 \eiglargest{i}{A+B} \geq \eiglargest{i}{A}+\eiglargest{n}{B}.
\]
\end{lemma}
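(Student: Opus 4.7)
The plan is to derive the inequality from the Courant--Fischer min-max characterization of the eigenvalues of a real symmetric matrix. Recall that for a real symmetric $n\times n$ matrix $M$,
\[
\eigsmallest{i}{M}=\min_{\substack{S\subseteq \Rea^n\\ \dim S=i}}\ \max_{\substack{x\in S\\ x\neq 0}}\ \frac{x^{T}Mx}{x^{T}x},
\]
and in particular $\eigsmallest{1}{B}\leq x^{T}Bx/x^{T}x$ for every nonzero $x\in \Rea^n$.

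I would first prove the lower bound on $\eigsmallest{i}{A+B}$. Fix any subspace $S\subseteq \Rea^{n}$ of dimension $i$. For every nonzero $x\in S$,
\[
\frac{x^{T}(A+B)x}{x^{T}x}=\frac{x^{T}Ax}{x^{T}x}+\frac{x^{T}Bx}{x^{T}x}\geq \frac{x^{T}Ax}{x^{T}x}+\eigsmallest{1}{B}.
\]
Taking the maximum over nonzero $x\in S$ on both sides (the constant term $\eigsmallest{1}{B}$ passes through the maximum) gives
\[
\max_{\substack{x\in S\\ x\neq 0}} \frac{x^{T}(A+B)x}{x^{T}x} \ \geq\ \max_{\substack{x\in S\\ x\neq 0}} \frac{x^{T}Ax}{x^{T}x}+\eigsmallest{1}{B}.
\]
Now taking the minimum over all $i$-dimensional subspaces $S$ on both sides and invoking Courant--Fischer twice yields $\eigsmallest{i}{A+B}\geq \eigsmallest{i}{A}+\eigsmallest{1}{B}$, as required.

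For the equivalence with the second form, I would simply use the identity $\eiglargest{j}{M}=\eigsmallest{n+1-j}{M}$ valid for any symmetric $M$. Substituting $j=n+1-i$ into the first inequality (applied with index $n+1-i$ in place of $i$, and noting $\eigsmallest{1}{B}=\eiglargest{n}{B}$) produces exactly $\eiglargest{i}{A+B}\geq \eiglargest{i}{A}+\eiglargest{n}{B}$.

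There is no real obstacle here: the only subtlety to be careful about is that the constant $\eigsmallest{1}{B}$ must be allowed to pass through both the $\max$ over $S$ and the subsequent $\min$ over $i$-dimensional subspaces, which is immediate since it does not depend on $x$ or $S$. Alternatively, one could avoid the min-max characterization entirely by diagonalizing $A$ and $B$ separately and using a dimension-counting argument on the eigenspaces, but the Courant--Fischer approach above is the shortest route.
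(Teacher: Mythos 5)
Your proof is correct, and it is the standard textbook argument for this special case of Weyl's inequality via the Courant--Fischer min-max characterization. The paper itself does not prove Lemma~\ref{lemma:weyl}; it cites it as a known result from \cite{brouwer2011spectra}, so there is no proof in the paper to compare against. Your derivation fills that gap cleanly: the key observations---that $\eigsmallest{1}{B}$ is a uniform lower bound on the Rayleigh quotient of $B$, and that this constant passes through both the inner $\max$ and the outer $\min$---are exactly right, and the passage from the $\lambda^{\uparrow}$ form to the $\lambda^{\downarrow}$ form via $\eiglargest{j}{M}=\eigsmallest{n+1-j}{M}$ is also correct.
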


The following result is known as Cauchy's interlacing theorem:

\begin{theorem}[See e.g. {\cite[Cor. 2.5.2]{brouwer2011spectra}}]\label{thm:cauchy}\label{thm:interlacing}
Let $A$ be a real symmetric matrix of size $n\times n$ and $B$ a principal submatrix of $A$ of size $m\times m$. 
Then, for all $1\leq i\leq m$, 
\[
    \eigsmallest{i}{A} \leq \eigsmallest{i}{B}\leq \eigsmallest{n-m+i}{A}.
\]
\end{theorem}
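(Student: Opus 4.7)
The plan is to derive both inequalities from the Courant–Fischer min–max characterization of eigenvalues of a real symmetric matrix: for any real symmetric $N\times N$ matrix $M$ and $1\leq j\leq N$,
\[
\eigsmallest{j}{M}\;=\;\min_{\substack{S\subseteq \Rea^{N}\\ \dim S=j}}\ \max_{0\neq x\in S}\frac{x^{T}Mx}{x^{T}x}\;=\;\max_{\substack{S\subseteq \Rea^{N}\\ \dim S=N-j+1}}\ \min_{0\neq x\in S}\frac{x^{T}Mx}{x^{T}x}.
\]
Let $I\subseteq [n]$ with $|I|=m$ be the index set on which $B$ is the principal submatrix of $A$, and identify $\Rea^{m}$ with the coordinate subspace $V_{I}\subseteq \Rea^{n}$ of vectors supported on $I$. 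Under this identification, the quadratic forms $y\mapsto y^{T}By$ on $\Rea^{m}$ and $x\mapsto x^{T}Ax$ on $\Rea^{n}$ agree on $V_{I}$.

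For the left inequality $\eigsmallest{i}{A}\leq \eigsmallest{i}{B}$, I would use the min–max form. Every $i$-dimensional subspace $T\subseteq \Rea^{m}$ corresponds under the embedding to an $i$-dimensional subspace of $V_{I}\subseteq \Rea^{n}$ on which the two Rayleigh quotients coincide. Restricting the outer minimum in the Courant–Fischer expression for $\eigsmallest{i}{A}$ to subspaces contained in $V_{I}$ can only increase its value, and the restricted minimum is exactly $\eigsmallest{i}{B}$.

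For the right inequality $\eigsmallest{i}{B}\leq \eigsmallest{n-m+i}{A}$, I would use the max–min form. With $j:=n-m+i$ on the $A$-side the relevant outer subspace dimension is $n-j+1=m-i+1$, which matches precisely the outer subspace dimension $m-i+1$ appearing in the max–min form of $\eigsmallest{i}{B}$. Since $m-i+1\leq m=\dim V_{I}$, we can restrict the outer maximum in $\eigsmallest{n-m+i}{A}$ to subspaces contained in $V_{I}$, which can only decrease it; the resulting restricted maximum equals $\eigsmallest{i}{B}$ under the identification, giving the bound.

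There is no genuine obstacle: the result is classical and the argument is mechanical once the two forms of Courant–Fischer are in hand. The only point requiring a bit of care is the bookkeeping that ties the index shift ``$i\leadsto n-m+i$'' between $B$ and $A$ to the complementary subspace dimensions ``$j\leadsto N-j+1$'' in the two forms of the variational characterization; this is exactly what makes the two halves of the interlacing symmetric. (As a shortcut, one may instead prove only the first inequality and deduce the second by applying it to $-A$ and $-B$, using $\eigsmallest{j}{-M}=-\eigsmallest{N-j+1}{M}$.)
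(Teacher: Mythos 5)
The paper does not prove this statement; it is a classical result (Cauchy interlacing) cited directly from \cite[Cor.\ 2.5.2]{brouwer2011spectra}, so there is no in-paper proof to compare against. Your argument via the two forms of the Courant--Fischer characterization is correct and is the standard textbook proof: embedding $\Rea^m$ as the coordinate subspace $V_I\subseteq\Rea^n$ identifies the Rayleigh quotients of $B$ and of $A$ restricted to $V_I$, and then shrinking the feasible family of subspaces in the outer $\min$ (resp.\ $\max$) yields the lower (resp.\ upper) bound, with the index bookkeeping $n-(n-m+i)+1=m-i+1$ matching the complementary dimension for $\eigsmallest{i}{B}$ exactly as you say. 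The shortcut of deducing the second inequality from the first by passing to $-A$, $-B$ and using $\eigsmallest{j}{-M}=-\eigsmallest{N-j+1}{M}$ is also valid and slightly cleaner.
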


Finally, we will need the next result, known as Ger{\v s}gorin's circle theorem.

\begin{theorem}[See e.g. {\cite[Theorem 6.1.1]{horn2012matrix}}]
    \label{thm:gersgorin}
  Let $M\in \mathbb{C}^{n\times n}$ and let $\lambda$ be an eigenvalue of $M$. Then, there is some $1\leq i\leq n$ such that
  \[   
    |\lambda-M_{i,i}|\leq \sum_{j\neq i} |M_{j,i}|.
  \]  
  In particular,
  \[
    |\lambda|\leq \max \left\{\sum_{j=1}^n |M_{j,i}| :\, 1\leq i\leq n\right\}.
  \]
\end{theorem}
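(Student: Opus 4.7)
The plan is to reduce the stated column-sum version to the more familiar row-sum version by observing that $M$ and its transpose $M^T$ have the same characteristic polynomial and hence the same spectrum. So it suffices to prove the row-sum statement: for every eigenvalue $\lambda$ of a matrix $A\in\mathbb{C}^{n\times n}$, there exists $i$ with $|\lambda-A_{i,i}|\leq \sum_{j\neq i}|A_{i,j}|$. Once this is established for $A=M^T$, reading off the entries $A_{i,j}=M_{j,i}$ gives exactly the bound in the theorem.

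First I would fix an eigenvector $v\in\mathbb{C}^n\setminus\{0\}$ of $A$ corresponding to $\lambda$, and choose an index $i$ with $|v_i|=\max_j |v_j|>0$. This maximum-coordinate trick is the standard device and is really the only idea in the proof. Writing out the $i$-th coordinate of $Av=\lambda v$, I get
\[
(\lambda-A_{i,i})v_i = \sum_{j\neq i} A_{i,j} v_j.
\]
Dividing by $v_i$ (which is nonzero by the choice of $i$), taking absolute values, and using $|v_j/v_i|\leq 1$ yields
\[
|\lambda-A_{i,i}| \leq \sum_{j\neq i} |A_{i,j}|\cdot\frac{|v_j|}{|v_i|} \leq \sum_{j\neq i} |A_{i,j}|,
\]
which is the row version applied to $A$. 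Substituting $A=M^T$ gives the first inequality in the statement.

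For the ``in particular'' bound, once an index $i$ satisfying the disk inequality is fixed, I would apply the triangle inequality in the form $|\lambda|\leq |\lambda-M_{i,i}|+|M_{i,i}|$ and then add the diagonal term back into the sum, yielding $|\lambda|\leq \sum_{j=1}^n |M_{j,i}|$. Taking the maximum over $i$ (since we do not know in advance which column index the eigenvalue is associated to) produces the stated uniform bound. The only real obstacle is a notational one — keeping track of row versus column sums when passing between $M$ and $M^T$ — but no further technical machinery is needed.
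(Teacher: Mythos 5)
Your proof is correct, and it is the standard textbook argument for Geršgorin's theorem: the maximum-coordinate trick applied to $M^T$ to obtain the column-sum version, followed by the triangle inequality for the ``in particular'' bound. The paper cites this result from Horn and Johnson without giving a proof, so there is no internal argument to compare against; your reasoning matches what the cited reference provides.
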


\subsection{Additive compound matrices}\label{sec:compound}

Let $V$ be an $n$-dimensional vector space over a field $\FF$.  For $k\leq n$, let $\bigwedge^k V$ be the $k$-th exterior power of $V$. Given a linear operator $M:V\to V$, the \emph{ $k$-th additive compound} of $M$ is the linear operator $\sumwedge{k}{M}: \bigwedge^k V\to \bigwedge^k V$ defined by
\[
    \sumwedge{k}{M}(v_1\wedge \cdots \wedge v_k)=\sum_{i=1}^k v_1 \wedge \cdots \wedge (Mv_i )\wedge \cdots \wedge v_k
\]
for every $v_1,\ldots,v_k\in V$. Additive compound operators were studied by Wielandt in \cite{wielandt1967topics} (see also \cite{marshall1979inequalities}). Applications of  additive compounds to differential equations were investigated by
Schwarz in \cite{schwarz1970totally} and London in \cite{london1976derivations} (see also \cite{muldowney1990compound}). In \cite{fiedler1974additive}, Fiedler studied a family of generalized compound operators, interpolating between the classical (multiplicative) compounds and the additive compounds.

A useful property of the operator $\sumwedge{k}{M}$ is the relation between its spectrum and that of $M$:
\begin{theorem}[See e.g. {\cite[Thm. F.5]{marshall1979inequalities}},{\cite[Thm. 2.1]{fiedler1974additive}}]
\label{thm:additive_compound_eigenvalues}
Let $M$ be an $n\times n$ matrix over a field $\mathbb{F}$, with eigenvalues $\lambda_1,\ldots,\lambda_n$. Then, the $k$-th additive compound $\sumwedge{k}{M}$ has eigenvalues $\lambda_{i_1}+\cdots+\lambda_{i_k}$, for $1\leq i_1<\cdots<i_k\leq n$. 
\end{theorem}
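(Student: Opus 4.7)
The plan is to triangularize $M$ and then read off the spectrum of $\sumwedge{k}{M}$ from its upper triangular representation in the induced basis of $\bigwedge^k V$.

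First, I would observe that the construction is functorial in $M$: for any invertible $P$ one has $\sumwedge{k}{PMP^{-1}} = \wedge^k(P) \cdot \sumwedge{k}{M} \cdot \wedge^k(P)^{-1}$, where $\wedge^k(P)$ denotes the multiplicative $k$-th compound (the induced isomorphism on $\bigwedge^k V$). This identity is immediate from the Leibniz-style definition of $\sumwedge{k}{M}$, applied to pure wedges $Pv_1 \wedge \cdots \wedge Pv_k$. Consequently the spectrum of $\sumwedge{k}{M}$ depends only on the similarity class of $M$, so by passing to the algebraic closure $\overline{\mathbb{F}}$ and applying Schur's theorem I may assume $M$ is upper triangular with respect to some basis $e_1, \ldots, e_n$, with diagonal entries $\lambda_1, \ldots, \lambda_n$; explicitly, $M e_i = \lambda_i e_i + \sum_{j < i} c_{j i} e_j$.

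Next, I would evaluate $\sumwedge{k}{M}$ on the induced basis $\{ e_I : I = \{i_1 < \cdots < i_k\} \in \binom{[n]}{k}\}$ of $\bigwedge^k V$, where $e_I := e_{i_1} \wedge \cdots \wedge e_{i_k}$. The Leibniz rule gives
\[
\sumwedge{k}{M}(e_I) = \Bigl(\sum_{s=1}^k \lambda_{i_s}\Bigr) e_I \;+\; \sum_{s=1}^k \sum_{j < i_s} c_{j, i_s}\, e_{i_1} \wedge \cdots \wedge e_j \wedge \cdots \wedge e_{i_k}.
\]
Each off-diagonal wedge vanishes if $j \in I \setminus \{i_s\}$ by antisymmetry, and otherwise equals $\pm e_{I'}$ with $I' = (I \setminus \{i_s\}) \cup \{j\}$. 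Ordering $\binom{[n]}{k}$ lexicographically, a short case split on where $j$ sits among $i_1, \ldots, i_{s-1}$ (either $j < i_1$, or $i_t < j < i_{t+1}$ for some $t < s$, or $i_{s-1} < j < i_s$) shows $I' < I$ in every nonzero case. Thus $\sumwedge{k}{M}$ has an upper triangular matrix in this ordered basis with diagonal entries exactly the sums $\lambda_{i_1} + \cdots + \lambda_{i_k}$, yielding the claim.

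The main obstacle is purely bookkeeping — verifying the lex-order assertion and checking that wedge signs do not disturb the diagonal — while the strategic content is the functoriality reduction to triangular form. Once $M$ is triangular the spectrum of $\sumwedge{k}{M}$ drops out of a direct expansion, so no deeper input (e.g.\ a density argument over non-algebraically-closed $\mathbb{F}$) is needed beyond the Schur reduction.
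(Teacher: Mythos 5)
Your proof is correct. The paper cites this result to Marshall--Olkin and Fiedler without reproducing a proof, so there is no in-paper argument to compare against; your triangularization route (conjugation-equivariance of $\sumwedge{k}{\cdot}$, reduction to an upper-triangular $M$ over $\overline{\mathbb{F}}$, and the observation that lexicographic ordering on $\binom{[n]}{k}$ makes $\sumwedge{k}{M}$ upper triangular with the claimed diagonal) is one of the standard proofs and works over an arbitrary field, which is exactly the generality the theorem asserts. The lex-order check is carried out correctly, and the vanishing of terms with $j\in I\setminus\{i_s\}$ by antisymmetry is the right way to see why only index sets $I'\prec I$ can appear.
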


Let $e_1,\ldots,e_n$ be the standard basis for $V$. Then $\{e_{i_1}\wedge\cdots\wedge e_{i_k} : \, 1\leq i_1<\cdots <i_k\leq n\}$ is a basis for the exterior power $\bigwedge^k V$. Identifying $\sumwedge{k}{M}$ with its matrix representation with respect to this basis, we obtain:

\begin{theorem}[See e.g. {\cite[Thm. 2.4]{fiedler1974additive}}]\label{thm:additive_compound_matrix}
  Let $M$ be an $n\times n$ matrix, and let $1\leq k\leq n$. Then, $\sumwedge{k}{M}$ is an $\binom{n}{k}\times\binom{n}{k}$ matrix, with rows and columns indexed by the $k$-subsets of $[n]$, defined by
   \[
    (\sumwedge{k}{M})_{\sigma,\tau}=\begin{cases}
        \sum_{i\in\sigma} M_{i,i} & \text{ if } \sigma=\tau,\\
        (-1)^{\epsilon(\sigma,\tau)} M_{i,j} & \text{ if } |\sigma\cap\tau|=k-1,\, \sigma\setminus\tau=\{i\},\, \tau\setminus\sigma=\{j\},\\
        0 &\text{ otherwise,}
    \end{cases}
    \]   
for every $\sigma,\tau\in\binom{[n]}{k}$, where, for $|\sigma\cap\tau|=k-1$, $ \sigma\setminus\tau=\{i\}$, $\tau\setminus\sigma=\{j\},$  $\epsilon(\sigma,\tau)$ denotes the number of elements in $\sigma\cap \tau$ between $i$ and $j$.  
\end{theorem}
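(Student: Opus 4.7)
The plan is to read off the matrix entries of $\sumwedge{k}{M}$ by applying its definition to each standard basis element $e_\tau = e_{j_1}\wedge\cdots\wedge e_{j_k}$ (with $\tau=\{j_1<\cdots<j_k\}$) and expanding $M e_{j_l}=\sum_{i=1}^n M_{i,j_l} e_i$. This yields
\[
\sumwedge{k}{M}(e_\tau)=\sum_{l=1}^k \sum_{i=1}^n M_{i,j_l}\, e_{j_1}\wedge\cdots\wedge \underbrace{e_i}_{\text{pos. }l}\wedge\cdots\wedge e_{j_k},
\]
so the entry $(\sumwedge{k}{M})_{\sigma,\tau}$ is obtained by collecting the coefficient of $e_\sigma$ (written in increasing order) on the right-hand side. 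First I would discard the summands whose wedge factor has a repeated index: the term with a chosen $l$ and $i$ vanishes unless $i\notin\tau\setminus\{j_l\}$. What remains is indexed by pairs $(l,i)$ with $i=j_l$ (contributing to the $\sigma=\tau$ entry) or $i\notin\tau$ (contributing to entries $\sigma$ with $|\sigma\cap\tau|=k-1$).

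Next I would handle the diagonal case $\sigma=\tau$. The only pairs $(l,i)$ that contribute have $i=j_l$, in which case the wedge factor is exactly $e_\tau$ with no sign change, giving coefficient $\sum_{l=1}^{k} M_{j_l,j_l}=\sum_{i\in\sigma} M_{i,i}$. Thus the diagonal part of the formula is immediate.

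For the off-diagonal case, I would fix $\sigma\neq\tau$ with $|\sigma\cap\tau|=k-1$, write $\tau\setminus\sigma=\{j\}$ and $\sigma\setminus\tau=\{i\}$, and observe that the only surviving summand has the index $l$ determined by $j_l=j$, with the corresponding $i\in\sigma\setminus\tau$. The resulting wedge product is $e_{j_1}\wedge\cdots\wedge e_i\wedge\cdots\wedge e_{j_k}$, with $e_i$ inserted in position $l$; rearranging it into increasing order $e_\sigma$ produces a sign $(-1)^s$, where $s$ is the number of adjacent transpositions needed to move $e_i$ past those $e_{j_m}$ (with $m\neq l$) that lie strictly between $i$ and $j$ in value. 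These indices $j_m$ are precisely the elements of $\sigma\cap\tau$ strictly between $i$ and $j$, so $s=\epsilon(\sigma,\tau)$. If $|\sigma\cap\tau|<k-1$, there is no way to obtain $e_\sigma$ by replacing a single $j_l$ with an $i\notin\tau$, so the entry vanishes.

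The only genuinely delicate step is the sign computation in the off-diagonal case, because one has to be careful about the distinction between the insertion position $l$ (a position in the tuple $(j_1,\ldots,j_k)$) and the comparison of the numerical values of $i$ and $j$. I would verify this by treating the two cases $i<j$ and $i>j$ symmetrically, in each case counting the elements of $\tau\setminus\{j\}=\sigma\cap\tau$ that lie on the opposite side of $j$ from where they should be relative to $i$; this count is manifestly $\epsilon(\sigma,\tau)$. Combining the three cases gives the formula in the statement.
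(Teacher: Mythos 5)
The paper states this result with a citation to Fiedler and does not give a proof of its own, so there is nothing to compare against; your direct expansion-and-coefficient-reading argument is a correct proof. In particular the sign bookkeeping is handled properly: after replacing $e_{j_l}$ (with $j_l=j$) by $e_i$, sorting $e_i$ into place requires one adjacent transposition for each $j_m\in\sigma\cap\tau$ lying strictly between $i$ and $j$ (those with $m<l$ and $j_m>i$ when $i<j$, or those with $m>l$ and $j_m<i$ when $i>j$), which is exactly $\epsilon(\sigma,\tau)$, and the case $|\sigma\cap\tau|\le k-2$ is correctly ruled out since a single slot replacement cannot change two or more indices.
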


\subsection{High dimensional Laplacians}\label{sec:higher_laplacians}

Let $X$ be a simplicial complex on vertex set $[n]$. An element $\sigma\in X$ is called a \emph{face} or \emph{simplex} of $X$. The \emph{dimension} of $\sigma\in X$ is defined as $\dim(\sigma)=|\sigma|-1$. For every $k\geq -1$, let $X(k)$ be the collection of all $k$-dimensional faces of $X$, and denote $f_k(X)=|X(k)|$.
For a simplex $\sigma\in X$, let $N_X(\sigma)=\{v\in V\setminus \sigma:\, \sigma\cup\{v\}\in X\}$.

For $\sigma=\{i_0,\ldots,i_k\}\in X$, where $1\leq i_0<i_1<\ldots<i_k\leq n$, let $e_{\sigma}=e_{i_0}\wedge \cdots \wedge e_{i_k}\in \bigwedge^{k+1}\Rea^{n}$.
For $k\geq -1$, the space of \emph{$k$-cochains} on $X$, denoted by $C^k(X;\Rea)$,  is the subspace of $\bigwedge^{k+1} \Rea^{n}$ spanned by the elements $\{e_{\sigma}:\, \sigma\in X(k)\}$. 
We will call this spanning set the \emph{standard basis} for $C^k(X;\Rea)$.
The \emph{$k$-th coboundary operator} is the linear operator $d_k:C^k(X;\Rea)\to C^{k+1}(X;\Rea)$ acting on standard basis elements by
\[
   d_k (e_{i_0}\wedge \cdots \wedge e_{i_k})= \sum_{j\in N_X(\{i_0,\ldots,i_k\})}
   e_{i_0}\wedge \cdots \wedge e_{i_k}\wedge e_j.
\]
The \emph{$k$-th (reduced) cohomology group} of $X$ is then defined as
\[
    \tilde{H}^k(X;\Rea)= \Ker d_k / \Ima d_{k-1}.
\]
It follows from the universal coefficient theorem (and also by more elementary arguments) that $\tilde{H}^k(X;\Rea)$ is isomorphic to $ \tilde{H}_k(X;\Rea)$, the $k$-th homology group of $X$.

Let $w: X\to \Rea_{\geq 0}$ be a weight function on $X$. We say that $w$ is positive if $w(\sigma)>0$ for all $\sigma\in X$. A positive weight function on $X$ induces an inner product on each $C^k(X;\Rea)$, defined by
\[
 \langle e_{\sigma},e_{\tau}\rangle= \begin{cases} w(\sigma) & \text{ if } \sigma=\tau,\\
 0 & \text{ otherwise,}
 \end{cases}
\]
for all $\sigma,\tau\in X(k)$.
Let $d_k^*:C^{k+1}(X;\Rea)\to C^{k}(X;\Rea)$ be the adjoint of $d_k$ with respect to the inner products induced by $w$. The $w$-weighted \emph{$k$-dimensional Laplacian operator} on $X$ is defined as 
\[
    \lapw{k}{X}{w}=  d_k^* d_k + d_{k-1} d_{k-1}^* : C^k(X;\Rea)\to C^k(X;\Rea).
\]
Note that $\lapw{k}{X}{w}$ is a positive semi-definite operator.  
The simplicial Hodge theorem, observed by Eckmann in \cite{eckmann1944harmonische}, states that $\Ker \lapw{k}{X}{w}$ is isomorphic to the $k$-th cohomology group $\tilde{H}^k(X;\Rea)$ (and therefore also to the $k$-th homology group $\tilde{H}_k(X;\Rea)$). We can restate this as:
\begin{theorem}[Eckmann \cite{eckmann1944harmonische}]\label{thm:eckmann}
    Let $X$ be a simplicial complex, and let $w:X\to\Rea_{>0}$. Then, for all $k\geq 0$ and $0\leq j\leq f_k(X)-1$, $\dim(\tilde{H}_k(X;\Rea))\leq j$ if and only if $\eigsmallest{j+1}{\lapw{k}{X}{w}}>0$.
\end{theorem}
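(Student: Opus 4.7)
The plan is to deduce the statement from the standard simplicial Hodge decomposition, which I would establish in a few lines using only that $\lapw{k}{X}{w}$ is a self-adjoint positive semi-definite operator with respect to the $w$-induced inner product on $C^k(X;\Rea)$.

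First I would verify that $\lapw{k}{X}{w}$ is self-adjoint and positive semi-definite. Since $d_k^*$ is the adjoint of $d_k$, both $d_k^* d_k$ and $d_{k-1} d_{k-1}^*$ are self-adjoint, and for any $f\in C^k(X;\Rea)$,
\[
\langle \lapw{k}{X}{w} f, f\rangle = \|d_k f\|^2 + \|d_{k-1}^* f\|^2 \geq 0,
\]
where the norms are those induced by $w$. In particular, all eigenvalues of $\lapw{k}{X}{w}$ are non-negative real numbers, and $f\in \Ker \lapw{k}{X}{w}$ if and only if $d_k f = 0$ and $d_{k-1}^* f = 0$. Hence
\[
\Ker \lapw{k}{X}{w} = \Ker d_k \cap \Ker d_{k-1}^* = \Ker d_k \cap (\Ima d_{k-1})^{\perp},
\]
using the standard fact $\Ker d_{k-1}^* = (\Ima d_{k-1})^{\perp}$.

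Next I would apply $d_k \circ d_{k-1} = 0$ to observe that $\Ima d_{k-1} \subseteq \Ker d_k$, so inside the finite-dimensional inner-product space $C^k(X;\Rea)$ we have the orthogonal decomposition
\[
\Ker d_k = \Ima d_{k-1} \oplus \left(\Ker d_k \cap (\Ima d_{k-1})^{\perp}\right) = \Ima d_{k-1} \oplus \Ker \lapw{k}{X}{w}.
\]
Quotienting gives a canonical isomorphism $\tilde{H}^k(X;\Rea) = \Ker d_k / \Ima d_{k-1} \cong \Ker \lapw{k}{X}{w}$. Combined with the isomorphism $\tilde{H}^k(X;\Rea)\cong \tilde{H}_k(X;\Rea)$ (from the universal coefficient theorem, or simply by dualising the chain complex in this finite-dimensional real setting), this yields
\[
\dim\bigl(\tilde{H}_k(X;\Rea)\bigr) = \dim \Ker \lapw{k}{X}{w}.
\]

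Finally I would translate this equality of dimensions into the eigenvalue statement. Since $\lapw{k}{X}{w}$ is positive semi-definite, $\dim \Ker \lapw{k}{X}{w}$ equals the number of zero eigenvalues of $\lapw{k}{X}{w}$, which, ordering eigenvalues as $\eigsmallest{1}{\lapw{k}{X}{w}} \leq \eigsmallest{2}{\lapw{k}{X}{w}} \leq \cdots \leq \eigsmallest{f_k(X)}{\lapw{k}{X}{w}}$, is at most $j$ precisely when $\eigsmallest{j+1}{\lapw{k}{X}{w}} > 0$. Combining with the previous paragraph gives the stated equivalence. No step looks like a genuine obstacle here: the content is really the orthogonal Hodge decomposition, and once positive semi-definiteness and the identification of the kernel are in place, the rest is bookkeeping about non-negative eigenvalues.
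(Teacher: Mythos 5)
Your argument is correct and complete: it is the standard Hodge-theoretic proof of Eckmann's theorem via the orthogonal decomposition $\Ker d_k = \Ima d_{k-1} \oplus \Ker \lapw{k}{X}{w}$, followed by the elementary observation that for a positive semi-definite operator the nullity is at most $j$ exactly when the $(j+1)$-th smallest eigenvalue is positive. The paper itself does not reprove this classical result and simply cites Eckmann, so there is no competing argument to compare against; your write-up supplies the expected proof and fills in all the needed steps (self-adjointness, $\Ker d_{k-1}^* = (\Ima d_{k-1})^\perp$, and $d_k d_{k-1} = 0$) without gaps.
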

In particular, we have $\tilde{H}_k(X;\Rea)=0$ if and only if $\eigsmallest{1}{\lapw{k}{X}{w}}>0$.
Identifying $\lapw{k}{X}{w}$ with its matrix representation with respect to the standard basis, we obtain the following explicit description of $\lapw{k}{X}{w}$.

\begin{theorem}[\cite{horak2013spectra}, see also {\cite[Eq. 3.4]{duval2002shifted}}]\label{thm:k_laplacian_matrix}
Let $X$ be a simplicial complex on vertex set $[n]$, and let $w:X\to\Rea_{>0}$. Then, for all $k\geq -1$, $\lapw{k}{X}{w}$ is an $f_k(X)\times f_k(X)$ matrix, with rows and columns indexed by the $k$-dimensional simplices of $X$, defined by
\[
    \lapw{k}{X}{w}_{\sigma,\tau}= \begin{dcases}
        \sum_{u\in N_X(\sigma)} \frac{w(\sigma\cup\{u\})}{w(\sigma)} + \sum_{v\in\sigma} \frac{w(\sigma)}{w(\sigma\setminus \{v\})}  & \text{ if } \sigma=\tau,\\
        (-1)^{\epsilon(\sigma,\tau)} \frac{w(\tau)}{w(\sigma\cap \tau)} & \text{ if } |\sigma\cap\tau|=k,\, \sigma\cup\tau\notin X,\\
        (-1)^{\epsilon(\sigma,\tau)}\left( \frac{w(\tau)}{w(\sigma\cap \tau)} - \frac{w(\sigma\cup\tau)}{w(\sigma)} \right) & \text{ if } |\sigma\cap\tau|=k,\, \sigma\cup\tau\in X,\\
        0 & \text{otherwise,}
    \end{dcases} 
\]
for every $\sigma,\tau\in X(k)$, where for $|\sigma\cap\tau|=k$, $ \sigma\setminus\tau=\{i\}$, $\tau\setminus\sigma=\{j\},$  $\epsilon(\sigma,\tau)$ denotes the number of elements in $\sigma\cap \tau$ between $i$ and $j$. 
\end{theorem}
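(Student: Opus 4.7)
The strategy is to compute the matrix representations of $d_k^*d_k$ and $d_{k-1}d_{k-1}^*$ separately in the standard basis, then add them, tracking signs with care.

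First I would write down the coboundary $d_k:C^k(X;\Rea)\to C^{k+1}(X;\Rea)$ explicitly. Starting from the definition, if $\sigma=\{i_0<\cdots<i_k\}\in X(k)$ and $u\in N_X(\sigma)$, sorting $e_{i_0}\wedge\cdots\wedge e_{i_k}\wedge e_u$ into standard order produces a sign $(-1)^{s(\sigma,u)}$, where I set $s(S,x)$ to be the number of elements of $S$ that are larger than $x$. Thus $d_k e_\sigma=\sum_{u\in N_X(\sigma)}(-1)^{s(\sigma,u)}e_{\sigma\cup\{u\}}$. Using the adjoint relation $\langle d_k^* e_\mu,e_\sigma\rangle_w=\langle e_\mu,d_ke_\sigma\rangle_w$ together with $\langle e_\mu,e_\mu\rangle_w=w(\mu)$, I would then derive $d_k^*e_\mu=\sum_{v\in\mu}(-1)^{s(\mu\setminus\{v\},v)}\frac{w(\mu)}{w(\mu\setminus\{v\})}e_{\mu\setminus\{v\}}$ for $\mu\in X(k+1)$ (and analogously for $d_{k-1}^*$).

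Next I would expand $d_{k-1}d_{k-1}^*e_\tau$ and $d_k^*d_ke_\tau$ by composition and read off the coefficient of $e_\sigma$. In $d_{k-1}d_{k-1}^*$, a term with coefficient of $e_\sigma$ arises by first deleting some $v\in\tau$ and then inserting some $u\in N_X(\tau\setminus\{v\})$ with $\sigma=(\tau\setminus\{v\})\cup\{u\}$; the constraint is automatic since $\sigma\in X$. In $d_k^*d_k$ the analogous term arises by first inserting $u\in N_X(\tau)$ and then deleting some $w\in\tau\cup\{u\}$—this imposes the extra condition that $\sigma\cup\tau=\tau\cup\{u\}\in X$. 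Taking $u=v$ (or $w=u$) yields the diagonal contributions $\sum_{v\in\sigma}\frac{w(\sigma)}{w(\sigma\setminus\{v\})}$ and $\sum_{u\in N_X(\sigma)}\frac{w(\sigma\cup\{u\})}{w(\sigma)}$ respectively. Terms with $|\sigma\cap\tau|<k$ vanish, since each operator alters at most one element of $\tau$.

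The main obstacle is the sign bookkeeping for the off-diagonal entries. The key lemma is that for $i,j\notin S$, $s(S,i)+s(S,j)\equiv \epsilon\pmod 2$, where $\epsilon$ counts elements of $S$ lying between $i$ and $j$. Applying this with $S=\sigma\cap\tau$, $\tau\setminus\sigma=\{i\}$, and $\sigma\setminus\tau=\{j\}$, the $d_{k-1}d_{k-1}^*$ off-diagonal contribution simplifies to $(-1)^{\epsilon(\sigma,\tau)}\frac{w(\tau)}{w(\sigma\cap\tau)}$. For $d_k^*d_k$, a similar simplification is needed, but with the substitutions $s(\tau,j)=s(\sigma\cap\tau,j)+[i>j]$ and $s(\sigma,i)=s(\sigma\cap\tau,i)+[j>i]$, the summed exponent becomes $s(\sigma\cap\tau,i)+s(\sigma\cap\tau,j)+1\equiv\epsilon(\sigma,\tau)+1\pmod 2$, producing an extra minus sign—this is the arithmetic origin of the subtraction in the $\sigma\cup\tau\in X$ case. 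Adding the two contributions and separating according to whether $\sigma\cup\tau\in X$ then gives exactly the formula in the theorem.
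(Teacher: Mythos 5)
Your computation is correct: the coboundary formula with the sign $(-1)^{s(\sigma,u)}$ from sorting the appended factor, the adjoint formula with the weight ratio $w(\mu)/w(\mu\setminus\{v\})$, the parity lemma $s(S,i)+s(S,j)\equiv\epsilon\pmod 2$, and the extra $+1$ in the exponent for the up--down composition (coming from exactly one of $[i>j]$, $[j>i]$ being $1$) all check out, and they assemble into precisely the stated matrix. The paper itself gives no proof of this theorem --- it is quoted from Horak--Jost and Duval--Reiner --- so there is no internal argument to compare against; your direct basis computation is the standard verification and is complete as a sketch. (One cosmetic point: avoid reusing $w$ as the name of the deleted vertex, since it already denotes the weight function.)
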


We will focus here on a special class of weight functions. Let $w:[n]\to \Rea_{> 0}$. We extend $w$ to all faces of $X$ by defining
\[
    w(\sigma) = \prod_{v\in \sigma} w(v)
\]
for every $\emptyset\ne \sigma\in X$, and $w(\emptyset)=1$.
In this case, we obtain from Theorem \ref{thm:k_laplacian_matrix} the following representation for $\lapw{k}{X}{w}$.
\begin{lemma}
    \label{lemma:vertex_weighted_k_lap}
Let $X$ be a simplicial complex on vertex set $[n]$, and let $w:[n]\to\Rea_{> 0}$. Then, for all $k\geq -1$, $\lapw{k}{X}{w}$ is an $f_k(X)\times f_k(X)$ matrix, with rows and columns indexed by the $k$-dimensional simplices of $X$, defined by
\[
    \lapw{k}{X}{w}_{\sigma,\tau}= \begin{dcases}
        \sum_{u\in N_X(\sigma)} w(u) + \sum_{v\in\sigma} w(v) & \text{ if } \sigma=\tau,\\
          (-1)^{\epsilon(\sigma,\tau)} w(v) & \text{if } |\sigma\cap\tau|=k, \,\tau\setminus\sigma=\{v\},\,
        \sigma\cup\tau\notin X,
        \\
        0 & \text{otherwise,}
    \end{dcases} 
\]
for every $\sigma,\tau\in X(k)$, where for $|\sigma\cap\tau|=k$, $ \sigma\setminus\tau=\{i\}$, $\tau\setminus\sigma=\{j\},$  $\epsilon(\sigma,\tau)$ denotes the number of elements in $\sigma\cap \tau$ between $i$ and $j$. 
\end{lemma}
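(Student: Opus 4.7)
The plan is to derive Lemma \ref{lemma:vertex_weighted_k_lap} as a direct specialization of Theorem \ref{thm:k_laplacian_matrix} to the multiplicative extension $w(\sigma)=\prod_{v\in\sigma}w(v)$. First I would record the two ratio identities that drive every case of the computation:
\[
   \frac{w(\sigma\cup\{u\})}{w(\sigma)}=w(u),\qquad \frac{w(\sigma)}{w(\sigma\setminus\{v\})}=w(v),
\]
valid for any face $\sigma\in X$, any $u\in N_X(\sigma)$, and any $v\in\sigma$. Substituting these into the diagonal entry of Theorem \ref{thm:k_laplacian_matrix} immediately gives $\sum_{u\in N_X(\sigma)}w(u)+\sum_{v\in\sigma}w(v)$, matching the diagonal entry claimed in the lemma.

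Next I would handle the off-diagonal case $|\sigma\cap\tau|=k$ with $\tau\setminus\sigma=\{v\}$. Since $\tau=(\sigma\cap\tau)\cup\{v\}$, the multiplicative form yields $w(\tau)/w(\sigma\cap\tau)=w(v)$. When $\sigma\cup\tau\notin X$, the formula in Theorem \ref{thm:k_laplacian_matrix} reduces in one step to $(-1)^{\epsilon(\sigma,\tau)}w(v)$, matching the lemma. When $\sigma\cup\tau\in X$, writing $\sigma\cup\tau=\sigma\cup\{v\}$ gives $w(\sigma\cup\tau)/w(\sigma)=w(v)$ as well, so the two terms inside the parentheses in Theorem \ref{thm:k_laplacian_matrix} are both $w(v)$ and cancel, producing $0$. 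This cancellation is the one subtle point in the argument: it is what allows the case $\sigma\cup\tau\in X$ to be absorbed into the ``otherwise, $0$'' clause of the lemma, and it explains why the vertex-weighted Laplacian sees only those $k$-face pairs whose union fails to be a face.

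The remaining cases $|\sigma\cap\tau|<k$ (and $\sigma\neq\tau$) already give $0$ in Theorem \ref{thm:k_laplacian_matrix}, and the sign convention $\epsilon(\sigma,\tau)$ is inherited verbatim. I do not expect any real obstacle: the proof is a pure substitution together with the single cancellation above, and no additional combinatorial or analytic input is needed beyond Theorem \ref{thm:k_laplacian_matrix}.
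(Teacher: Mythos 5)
Your proof is correct and is exactly the route the paper takes: the paper states that the lemma is obtained from Theorem \ref{thm:k_laplacian_matrix} by substituting the multiplicative extension $w(\sigma)=\prod_{v\in\sigma}w(v)$, and your substitution together with the cancellation in the $\sigma\cup\tau\in X$ case fills in that computation. No gaps.
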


We can extend the definition of the vertex-weighted $k$-dimensional Laplacian to non-negative weight functions $w:[n]\to\Rea_{\geq 0}$, by letting $\lapw{k}{X}{w}$ be defined as in Lemma \ref{lemma:vertex_weighted_k_lap}. In this case, the following holds.

\begin{lemma}\label{lemma:homology_from_eigenvalues_degenerate_case}
Let $X$ be a simplicial complex on vertex set $[n]$, and let $w:[n]\to\Rea_{\geq 0}$. Let $k\geq 0$ and $0\leq j\leq f_k(X)-1$. If $\eigsmallest{j+1}{\lapw{k}{X}{w}}>0$, then  $\dim (\tilde{H}_k(X;\Rea))\leq j$.
\end{lemma}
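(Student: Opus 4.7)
The plan is a perturbation argument reducing to Theorem \ref{thm:eckmann} for positive weights. For each $t>0$, define a positive weight $w_t:[n]\to\Rea_{>0}$ by $w_t(v)=w(v)$ if $w(v)>0$ and $w_t(v)=t$ otherwise, so that $w_t\to w$ pointwise as $t\to 0^+$. By the explicit formula in Lemma \ref{lemma:vertex_weighted_k_lap}, every entry of $\lapw{k}{X}{w_t}$ is a polynomial in the vertex weights $w_t(v)$, so $\lapw{k}{X}{w_t}\to \lapw{k}{X}{w}$ entrywise as $t\to 0^+$.

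For each $t>0$, since $w_t$ is positive, I would show that $\lapw{k}{X}{w_t}$ is similar to a real symmetric positive semi-definite matrix. This is the higher-dimensional analogue of the similarity $\glapw{G}{w}=W^{-1/2}\simlapw{G}{w}W^{1/2}$ described in the introduction: conjugating by the diagonal matrix whose $\sigma$-entry equals $\sqrt{w_t(\sigma)}=\prod_{v\in\sigma}\sqrt{w_t(v)}$ transforms the off-diagonal entry at position $(\sigma,\tau)$ (where $\sigma\setminus\tau=\{u\}$, $\tau\setminus\sigma=\{v\}$) from $\pm w_t(v)$ into the manifestly symmetric value $\pm\sqrt{w_t(u)w_t(v)}$, and the resulting symmetric matrix represents the positive semi-definite operator $d_k^*d_k+d_{k-1}d_{k-1}^*$ in an orthonormal basis. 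Consequently, all eigenvalues of $\lapw{k}{X}{w_t}$ are real and non-negative. By continuity of the spectrum of a matrix (as the multiset of roots of its characteristic polynomial) with respect to its entries, the eigenvalues of $\lapw{k}{X}{w}$ are themselves real and non-negative, and $\eigsmallest{i}{\lapw{k}{X}{w_t}}\to \eigsmallest{i}{\lapw{k}{X}{w}}$ for every $1\leq i\leq f_k(X)$ as $t\to 0^+$.

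With this in place, the conclusion is immediate: assuming $\eigsmallest{j+1}{\lapw{k}{X}{w}}>0$, continuity gives $\eigsmallest{j+1}{\lapw{k}{X}{w_t}}>0$ for all sufficiently small $t>0$, and Theorem \ref{thm:eckmann} applied to the positive weight $w_t$ yields $\dim(\tilde{H}_k(X;\Rea))\leq j$; since this bound involves neither $w$ nor $t$, the lemma follows. The main (mild) obstacle is that $\lapw{k}{X}{w}$ is not symmetric in the standard basis, so one cannot invoke standard perturbation theory for symmetric matrices directly; this is handled via the similarity to a symmetric matrix in the positive-weight case plus continuity of eigenvalues. Everything else is a routine limiting argument, and one could alternatively bypass eigenvalue-ordering altogether by using the upper semi-continuity of nullity, which gives $\dim(\tilde{H}_k(X;\Rea))=\dim\Ker(\lapw{k}{X}{w_t})\leq\dim\Ker(\lapw{k}{X}{w})\leq j$ for small $t$.
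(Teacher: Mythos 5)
Your proposal is correct and follows essentially the same perturbation argument as the paper: replace zero weights by a small positive parameter, observe entrywise convergence of the weighted Laplacians, invoke continuity of eigenvalues to transfer positivity of $\eigsmallest{j+1}{\cdot}$ to the perturbed matrices, and conclude via Eckmann's theorem. The only difference is that you spell out the justification for the eigenvalue continuity step (via explicit diagonal conjugation to a symmetric PSD matrix) a bit more carefully than the paper's terse "by the continuity of eigenvalues," which is a reasonable elaboration rather than a change of approach.
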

\begin{proof}
    For every $\epsilon>0$, we define a positive weight function $w_{\epsilon}:[n]\to \Rea_{>0}$ by
    \[
        w_{\epsilon}(v)=\begin{cases}
                    w(v) & \text{if } w(v)>0,\\
                    \epsilon & \text{if } w(v)=0.
             \end{cases} 
    \]
    By Lemma \ref{lemma:vertex_weighted_k_lap}, $\lapw{k}{X}{w}$ is the limit of $\lapw{k}{X}{w_{\epsilon}}$ as $\epsilon\to 0$. By the continuity of eigenvalues, the eigenvalues of $\lapw{k}{X}{w}$ are real numbers and satisfy $\eigsmallest{i}{\lapw{k}{X}{w}}=\lim_{\epsilon\to0} \eigsmallest{i}{\lapw{k}{X}{w_{\epsilon}}}$ for all $1\leq i\leq f_k(X)$. Assume $\eigsmallest{j+1}{\lapw{k}{X}{w}}>0$ for some $0\leq j\leq f_k(X)-1$. Then, for small enough $\epsilon$, we have $\eigsmallest{j+1}{\lapw{k}{X}{w_{\epsilon}}}>0$. Since $w_{\epsilon}$ is positive, we obtain from Theorem \ref{thm:eckmann} that $\tilde{H}_k(X;\Rea)\leq j$.
\end{proof}

\section{Main results}\label{sec:main}

In this section we prove our main result, Theorem \ref{thm:main_independence}, and its corollaries, Theorem \ref{thm:bettinumbers_indepedence_complex} and Corollary \ref{cor:connectivity}.
Let $G=(V,E)$ be a graph. The \emph{clique complex} (also know as the flag complex) of $G$ is the simplicial complex $X(G)$ on vertex set $V$ whose simplices are the cliques of $G$ (i.e. vertex subsets forming a complete subgraph). Note that for every graph $G$, $I(G)=X(\bar{G})$, where $\bar{G}$ is the graph complement of $G$.
We will need the following simple lemma about sums of ``weighted degrees" in a clique complex (see \cite[Claim 3.4]{aharoni2005eigenvalues} for a similar result in the unweighted setting).

\begin{lemma}\label{lemma:degree_sum}
    Let $G=(V,E)$ be a graph and let 
     $X=X(G)$. Let $w:V\to \Rea_{\geq 0}$.  Then, for all $k\geq 0$ and $\sigma\in X(k)$,
\[
     \left(\sum_{v\in \sigma} \sum_{u\in N_G(v)} w(u)\right)-\sum_{v\in N_X(\sigma)}w(v)\leq k\sum_{v\in V} w(v).
\]
\end{lemma}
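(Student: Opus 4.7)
The plan is to rewrite the double sum on the left-hand side by swapping the order of summation, and then bound the coefficient of each $w(u)$ uniformly by $k$, with the subtracted term $\sum_{v\in N_X(\sigma)} w(v)$ precisely correcting the one case where the coefficient would otherwise be $k+1$.

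More concretely, I would first observe that
\[
\sum_{v\in\sigma}\sum_{u\in N_G(v)} w(u) = \sum_{u\in V} \bigl|N_G(u)\cap\sigma\bigr|\,w(u),
\]
since each $w(u)$ is counted once for every $v\in\sigma$ with $u\in N_G(v)$. Then I would analyze $|N_G(u)\cap\sigma|$ by cases. If $u\in\sigma$, then (since $\sigma$ is a clique of size $k+1$ in $G$ and $u\notin N_G(u)$) we have $|N_G(u)\cap\sigma|=k$. If $u\notin\sigma$ and $u$ is adjacent to every vertex of $\sigma$, then $\sigma\cup\{u\}$ is a clique, so $u\in N_X(\sigma)$ and $|N_G(u)\cap\sigma|=k+1$. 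In all remaining cases (where $u\notin \sigma\cup N_X(\sigma)$) we have $|N_G(u)\cap\sigma|\leq k$.

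Combining these three cases yields
\[
\sum_{v\in\sigma}\sum_{u\in N_G(v)} w(u) \;\leq\; k\sum_{u\in\sigma} w(u) + (k+1)\sum_{u\in N_X(\sigma)} w(u) + k\sum_{u\in V\setminus(\sigma\cup N_X(\sigma))} w(u),
\]
and subtracting $\sum_{v\in N_X(\sigma)} w(v)$ from both sides collapses the coefficient on $N_X(\sigma)$ from $k+1$ to $k$, giving the uniform bound $k\sum_{u\in V} w(u)$.

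There is essentially no obstacle: the only point requiring slight care is making sure the cases in the analysis of $|N_G(u)\cap\sigma|$ are exhaustive and that the identification of "adjacent to all of $\sigma$ and not in $\sigma$" with membership in $N_X(\sigma)$ uses exactly the definition of the clique complex $X=X(G)$. The weights $w(u)$ being nonnegative ensures that replacing the exact counts by the upper bound $k$ in the two outer cases is legitimate.
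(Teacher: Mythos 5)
Your proof is correct and takes essentially the same approach as the paper: both bound the left-hand side by computing the coefficient of each $w(u)$ after reordering the double sum, using the clique-complex characterization $N_X(\sigma)=\bigcap_{v\in\sigma}N_G(v)$ to identify where the coefficient reaches $k+1$. Your version simply splits into three cases (isolating $u\in\sigma$) where the paper groups $\sigma$ together with the rest of $V\setminus N_X(\sigma)$ into a single $\leq k$ case, but the argument is the same.
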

\begin{proof}

    Let $\sigma\in X(k)$.
    Note that, since $X$ is a clique complex, we have $N_X(\sigma)=\bigcap_{v\in\sigma}N_G(v)$, and therefore
    \[
   \sum_{v\in \sigma} \sum_{u\in N_G(v)} w(u) \leq (k+1)\sum_{v\in N_X(\sigma)} w(v)+k\sum_{v\in V\setminus N_X(\sigma)} w(v).
    \]
Hence, we obtain
\begin{align*}
      \left(\sum_{v\in \sigma} \sum_{u\in N_G(v)} w(u)\right)-\sum_{v\in N_X(\sigma)}w(v)
     &\leq 
      k\sum_{v\in N_X(\sigma)} w(v)+k\sum_{v\in V\setminus N_X(\sigma)} w(v)
     \\ &=k\sum_{v\in V} w(v).
\end{align*}
\end{proof}

\begin{theorem}
\label{thm:main}
    Let $G=(V,E)$ be a graph, and let $w:V\to \Rea_{\geq 0}$. Then, for every  $k\geq 0$ and $1\leq i\leq f_k(X(G))$,
    \[
    \eigsmallest{i}{\lapw{k}{X(G)}{w}}\geq \ithminksum{k+1}{i}{\lapw{0}{X(G)}{w}}-k\sum_{v\in V} w(v).
    \]
\end{theorem}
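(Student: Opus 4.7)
The plan is to exhibit the symmetric form of $\lapw{k}{X(G)}{w}$, up to a controlled non-negative diagonal perturbation, as a principal submatrix of the $(k+1)$-th additive compound of the symmetric form of $\lapw{0}{X(G)}{w}$, and then chain Weyl's inequality, Cauchy's interlacing theorem, and Theorem \ref{thm:additive_compound_eigenvalues}. By the standard continuity argument used in Lemma \ref{lemma:homology_from_eigenvalues_degenerate_case}, I would first reduce to the case of a strictly positive weight $w:V\to\Rea_{>0}$. In that case, conjugating $\lapw{k}{X(G)}{w}$ by the diagonal matrix whose $(\sigma,\sigma)$-entry is $\sqrt{\prod_{v\in\sigma}w(v)}$ yields a similar, symmetric matrix $\mathcal{L}_k$ whose diagonal agrees with that of $\lapw{k}{X(G)}{w}$ and whose off-diagonal at $(\sigma,\tau)$ equals $(-1)^{\epsilon(\sigma,\tau)}\sqrt{w(u)w(v)}$ when $|\sigma\cap\tau|=k$, $\sigma\setminus\tau=\{u\}$, $\tau\setminus\sigma=\{v\}$, $\sigma\cup\tau\notin X(G)$, and is zero otherwise. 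The same symmetrization turns $\lapw{0}{X(G)}{w}$ into a symmetric matrix $\mathcal{L}_0$ with the same spectrum.

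The heart of the proof is the comparison between $\mathcal{L}_k$ and the principal submatrix $N$ of $\mathcal{L}_0^{[k+1]}$ indexed by $X(G)(k)$. Using Theorem \ref{thm:additive_compound_matrix}, I would check that for $\sigma,\tau\in X(G)(k)$ with $|\sigma\cap\tau|=k$ the off-diagonal entry of $N$ equals $(-1)^{\epsilon(\sigma,\tau)}(\mathcal{L}_0)_{u,v}$, which is $(-1)^{\epsilon(\sigma,\tau)}\sqrt{w(u)w(v)}$ when $\{u,v\}\notin E(G)$ and zero otherwise. The key observation, and the main subtle point of the argument, is that since $\sigma$ and $\tau$ are themselves cliques of $G$ sharing all but one vertex each, $\sigma\cup\tau$ is a face of $X(G)$ if and only if $\{u,v\}\in E(G)$; thus the off-diagonal patterns of $N$ and $\mathcal{L}_k$ coincide exactly. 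For the diagonals, $N_{\sigma,\sigma}=\sum_{i\in\sigma}\bigl(\sum_{u\in N_G(i)}w(u)+w(i)\bigr)$ while $(\mathcal{L}_k)_{\sigma,\sigma}=\sum_{u\in N_{X(G)}(\sigma)}w(u)+\sum_{v\in\sigma}w(v)$, and their difference is exactly the quantity estimated by Lemma \ref{lemma:degree_sum}: it lies in $[0,\, k\sum_{v\in V}w(v)]$, the lower bound following from $N_{X(G)}(\sigma)\subseteq N_G(i)$ for each $i\in\sigma$.

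This identifies $N=\mathcal{L}_k+D$ with $D$ a non-negative diagonal matrix of spectral norm at most $k\sum_{v\in V}w(v)$. The final step is a three-link chain: Weyl's inequality (Lemma \ref{lemma:weyl}) gives $\eigsmallest{i}{\mathcal{L}_k}\geq\eigsmallest{i}{N}-k\sum_{v\in V}w(v)$; Cauchy's interlacing theorem (Theorem \ref{thm:interlacing}) gives $\eigsmallest{i}{N}\geq\eigsmallest{i}{\mathcal{L}_0^{[k+1]}}$; and Theorem \ref{thm:additive_compound_eigenvalues} identifies $\eigsmallest{i}{\mathcal{L}_0^{[k+1]}}$ with $\ithminksum{k+1}{i}{\mathcal{L}_0}=\ithminksum{k+1}{i}{\lapw{0}{X(G)}{w}}$. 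Combining these, and using the fact that similar matrices have identical spectra so $\eigsmallest{i}{\mathcal{L}_k}=\eigsmallest{i}{\lapw{k}{X(G)}{w}}$, yields the desired bound. I expect the main bookkeeping obstacle to be the off-diagonal matching step: verifying that the sign convention $\epsilon(\sigma,\tau)$ arising in Theorem \ref{thm:additive_compound_matrix} agrees with the one appearing in Lemma \ref{lemma:vertex_weighted_k_lap}, and tracking the ``clique iff single edge'' dichotomy that forces $N$ and $\mathcal{L}_k$ to share the same sparsity pattern.
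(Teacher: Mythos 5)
Your proof is correct and follows essentially the same route as the paper: decompose the $k$-dimensional Laplacian of $X(G)$ as the principal submatrix of the $(k+1)$-th additive compound of the $0$-dimensional Laplacian indexed by $X(G)(k)$, minus a diagonal perturbation bounded by Lemma \ref{lemma:degree_sum}, and then chain Weyl's inequality, Cauchy interlacing, and Theorem \ref{thm:additive_compound_eigenvalues}. The one genuine difference is that you pass to the symmetric conjugates $\mathcal{L}_0,\mathcal{L}_k$ (and reduce to $w>0$ by continuity) before invoking Weyl and interlacing; the paper applies those two results directly to the non-symmetric matrices $\lapw{k}{X(G)}{w}$ and $\sumwedge{k+1}{\lapw{0}{X(G)}{w}}$, so your version is a slightly more careful rendering of the same argument.
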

\begin{proof}
Without loss of generality, assume $V=[n]$.
    Let $X=X(G)$.
    Let $L$ be the principal submatrix of $\sumwedge{k+1}{\lapw{0}{X}{w}}$ obtained by removing all rows and columns except those corresponding to $k$-dimensional faces of $X$.
    By Lemma \ref{lemma:vertex_weighted_k_lap}, for all $u,v\in V$,
    \begin{equation}\label{eq:0lap}
        \lapw{0}{X}{w}_{u,v}=\begin{cases}
            w(u)+\sum_{u'\in N_G(u)}w(u') & \text{ if } u=v,\\
            w(v) & \text{ if } \{u,v\}\notin E,\\
            0 & \text{ otherwise}.
        \end{cases}
    \end{equation}
        Let $\sigma,\tau\in X(k)$ with $|\sigma\cap\tau|=k$. Let $u,v$ be the two vertices in the symmetric difference $\sigma\triangle \tau$. Note that, since $X$ is a flag complex, we have $\sigma\cup\tau\in X$ if and only if $\{u,v\}\in E$.
    Thus, by Theorem \ref{thm:additive_compound_matrix}, we have
    \begin{align*}
        L_{\sigma,\tau} &=
        \begin{cases}
            \sum_{v\in \sigma} \left( w(v)+\sum_{u\in N_G(v)} w(u)\right) & \text{ if } \sigma=\tau,\\
            (-1)^{\epsilon(\sigma,\tau)}w(v) & \text{ if } |\sigma\cap \tau|=k, \tau\setminus\sigma=\{v\},\, \sigma\triangle\tau\notin E, \\
            0 & \text{ otherwise}
            \end{cases}
        \\
        &=\begin{cases}
            \sum_{v\in \sigma} \sum_{u\in N_G(v)} w(u) +\sum_{v\in \sigma}w(v) & \text{ if } \sigma=\tau,\\
            (-1)^{\epsilon(\sigma,\tau)}w(v) & \text{ if } |\sigma\cap \tau|=k, \tau\setminus\sigma=\{v\},\,\sigma\cup\tau\notin X,\\
            0 & \text{ otherwise,}
        \end{cases}
    \end{align*}
    for all $\sigma,\tau\in X(k)$. Let $R$ be the $f_k(X)\times f_k(X)$ diagonal matrix with elements
    \[
        R_{\sigma,\sigma}=\sum_{v\in N_X(\sigma)}w(v)-\sum_{v\in \sigma} \sum_{u\in N_G(v)}w(u) 
    \]
    for every $\sigma\in X(k)$. By Lemma \ref{lemma:vertex_weighted_k_lap}, we have
    \[
       \lapw{k}{X}{w}=L+R.
    \]
    By Lemma \ref{lemma:degree_sum}, $R_{\sigma,\sigma}\geq -k\sum_{v\in V}w(v)$ for all $\sigma\in X(k)$, and therefore $\eigsmallest{1}{R}\geq -k\sum_{v\in V} w(v)$. By Theorem \ref{thm:interlacing} and Theorem \ref{thm:additive_compound_eigenvalues}, we obtain for every $1\leq i\leq f_k(X)$,
    \[
        \eigsmallest{i}{L} \geq \eigsmallest{i}{\sumwedge{k+1}{\lapw{0}{X}{w}}} = \ithminksum{k+1}{i}{\lapw{0}{X}{w}}.
    \]
    Thus, by Lemma \ref{lemma:weyl}, 
    \[
    \eigsmallest{i}{\lapw{k}{X}{w}}\geq \eigsmallest{i}{L}+\eigsmallest{1}{R} \geq \ithminksum{k+1}{i}{\lapw{0}{X}{w}}- k\sum_{v\in V} w(v).
    \]
\end{proof}

Theorem \ref{thm:main_independence} now follows by applying Theorem \ref{thm:main} to the complement of the graph $G$.

\begin{proof}[Proof of Theorem \ref{thm:main_independence}]
    Let $\bar{G}$ be the complement graph of $G$. 
    Note that $I(G)=X(\bar{G})$.
By \eqref{eq:glap_def} and \eqref{eq:0lap}, we have 
\[
\lapw{0}{I(G)}{w}=\lapw{0}{X(\bar{G})}{w}=\left(\sum_{v\in V} w(v)\right)I- \glapw{G}{w},
\]
where $I$ is the $|V|\times |V|$ identity matrix.
So, $\eigsmallest{i}{\lapw{0}{I(G)}{w}}=\left(\sum_{v\in V} w(v)\right)-\eiglargest{i}{\glapw{G}{w}}$ for all $1\leq i\leq |V|$, and therefore
    \[
    \ithminksum{k+1}{i}{\lapw{0}{I(G)}{w}}=(k+1)\left(\sum_{v\in V} w(v)\right)-\ithmaxksum{k+1}{i}{\glapw{G}{w}}
    \]
for all $1\leq i\leq \binom{|V|}{k+1}$. Hence, by Theorem \ref{thm:main}, we obtain for all $1\leq i\leq f_k(I(G))$,
\begin{align*}
\eigsmallest{i}{\lapw{k}{I(G)}{w}}
&
\geq \ithminksum{k+1}{i}{\lapw{0}{I(G)}{w}}-k\sum_{v\in V} w(v)
\\
&= (k+1)\left(\sum_{v\in V}w(v)\right) - \ithmaxksum{k+1}{i}{\glapw{G}{w}} - k\sum_{v\in V} w(v)
\\
& = \left(\sum_{v\in V} w(v)\right)-\ithmaxksum{k+1}{i}{L^{w}(G)}.
\end{align*}
\end{proof}

\begin{proof}[Proof of Theorem \ref{thm:bettinumbers_indepedence_complex}]
Let
    \begin{align*}
j&=\left|\left\{ I\in\binom{[n]}{k+1} :\, \sum_{i\in I} \eiglargest{i}{\glapw{G}{w}}\geq \sum_{v\in V} w(v)\right\}\right|
\\&=\max\left(\left\{ 1\leq i\leq \binom{n}{k+1} :\, \ithmaxksum{k+1}{i}{L^{w}(G)}\geq \sum_{v\in V} w(v)\right\}\cup\{0\}\right).
    \end{align*}
If $j=\binom{n}{k+1}$, then we have $\dim(\tilde{H}_k(I(G);\Rea))\leq f_k(I(G))\leq j$ as wanted. Otherwise, by the maximality of $j$, we have
$\ithmaxksum{k+1}{j+1}{\glapw{G}{w}}<\sum_{v\in V}w(v)$, and therefore, by Theorem \ref{thm:main_independence}, $\eigsmallest{j+1}{\lapw{k}{I(G)}{w}}>0$. Hence, by Lemma \ref{lemma:homology_from_eigenvalues_degenerate_case}, $\dim(\tilde{H}_k(I(G);\Rea))\leq j$.
\end{proof}

\begin{proof}[Proof of Corollary \ref{cor:connectivity}]
Recall that we defined the homological connectivity of $I(G)$ as
$
\eta(I(G))=\max\{ k: \tilde{H}_{i}(I(G);\Rea)=0 \text{ for all } i\leq k-2\}.
$
    Let $k= \min \{ m: \, \maxksum{m}{L(G)}\geq \sum_{v\in V} w(v)\}$. If $k=1$ then, since  $\tilde{H}_{-1}(I(G))=0$, we have $\eta(I(G))\geq 1=k$, as wanted. Otherwise, assume $k>1$. Then, $\maxksum{m}{L(G)}<\sum_{v\in V} w(v)$ for $m\leq k-1$, and therefore, 
    by Theorem \ref{thm:bettinumbers_indepedence_complex}, $\tilde{H}_{i}(I(G);\Rea)=0$ for $i\leq k-2$. Thus, we obtain $\eta(I(G))\geq k$.
\end{proof}

\section{Domination and packing parameters}
\label{sec:applications}

In this section we prove Theorems \ref{thm:connectivity_from_quadratic_packing} and \ref{thm:homology_from_vector_reps}, relating the homology of the independence complex $I(G)$ to different packing and domination parameters of $G$.

Recall that a function $f:V\to \Rea_{\geq 0}$ is called a fractional quadratic packing if $\sum_{u\in N_G(v)} f(u)(f(u)+f(v))\leq 1$ for all $v\in V$, and $\rho_q^{*}(G)$ is defined as the supremum of $\sum_{v\in V} f(v)^2$ over all fractional quadratic packings of $G$.

\begin{proof}[Proof of Theorem \ref{thm:connectivity_from_quadratic_packing}]
    Let $f:V\to \Rea_{\geq 0}$ be a fractional quadratic packing of $G$, and let $w:V\to \Rea_{\geq 0}$ be defined by $w(v)=f(v)^2$ for all $v\in V$.
   By Ger{\v s}gorin's Theorem (Theorem \ref{thm:gersgorin}) and \eqref{eq:simlap}, we have
   \begin{align*}
   \eiglargest{1}{\simlapw{G}{w}} &\leq 
    \max \left\{ \sum_{u\in V}\left|\simlapw{G}{w}_{u,v}\right| : v\in V\right\}
\\
  &=
   \max \left\{ \sum_{u\in N_G(v)}f(u)^2 +\sum_{u\in N_G(v)} f(u)f(v) : v\in V\right\}
\\
&=
   \max \left\{ \sum_{u\in N_G(v)}f(u)(f(u)+f(v)) : v\in V\right\} \leq 1.
   \end{align*}
Thus, $\maxksum{m}{\simlapw{G}{w}}\leq m$ for all $m$. By Corollary \ref{cor:connectivity} (recall that the eigenvalues of $\simlapw{G}{w}$ are the same as those of $\glapw{G}{w}$), we obtain
\begin{align*}
       \eta(I(G))&\geq  \min \left\{ m: \, \maxksum{m}{\simlapw{G}{w}}\geq \sum_{v\in V} f(v)^2\right\}
    \\
&\geq \min \left\{ m: \, m\geq \sum_{v\in V} f(v)^2\right\}=\left\lceil \sum_{v\in V}f(v)^2\right\rceil.
\end{align*}
Since this holds for all fractional quadratic packings of $G$, we obtain $\eta(I(G))\geq \lceil \rho^*_q(G)\rceil\geq \rho^*_q(G)$, as wanted.
\end{proof}
\begin{remark}[Remarks.]
    \begin{enumerate}[leftmargin=*]
        \item By duality of linear programming, $\gamma^*_s(G)$ is the maximum of $\sum_{v\in V}f(v)$ over all $f:V\to\Rea_{\geq0}$ satisfying $\deg(v)f(v)+\sum_{u\in N_G(v)} f(u)\leq 1$ for all $v\in V$. Applying Ger{\v s}gorin's Theorem to the matrix $\glapw{G}{f}$, and following essentially the same arguments as in the proof of Theorem \ref{thm:connectivity_from_quadratic_packing}, we obtain a new proof of the bound $\eta(I(G))\geq \gamma_s^{*}(G)$.

        \item Let $C_n$ be the cycle graph on $n$ vertices. Assume its vertex set is $[n]$, and define $f:[n]\to \Rea_{\geq 0}$ by
        \[
            f(i)=\begin{cases}
                0 & \text{if } i \equiv 1 \mod 3,\\
                \frac{1}{\sqrt{2}} & \text{if } i\equiv 2, 0 \mod 3.
            \end{cases} 
        \]
        It is easy to check that $f$ is a fractional quadratic packing, and satisfies $\sum_{i=1}^n f(i)^2 = k$ if $n=3k$ or $n=3k+1$ and $\sum_{i=1}^n f(i)^2 = k+1/2$ if $n=3k+2$ for some $k$. Therefore, we obtain
        \[
            \rho^*_q(C_n)\geq \begin{cases}
                \lfrac{n}{3} & \text{if } n\equiv 0,1 \mod 3,\\
                \lfrac{n}{3}+\frac{1}{2} & \text{if } n\equiv 2 \mod 3.
            \end{cases}
        \]
        Note that this lower bound coincides with the lower bound obtained for $\Gamma(C_n)$ in \cite{aharoni2005eigenvalues}.
        By Theorem \ref{thm:connectivity_from_quadratic_packing}, we obtain $\eta(I(C_n))\geq \lceil \rho^*_q\rceil =\lfloor (n+1)/3\rfloor$, which is tight for all $n$ (see \cite[Claim 3.3]{meshulam2001clique} or \cite[Prop. 5.2]{kozlov1999complexes}), and is better than the bounds obtained from $\gamma_s^*(C_n)=n/4$ (see \cite{aharoni2005eigenvalues}) or $\rho(C_n)=\lfloor n/3 \rfloor$.
    \end{enumerate}
\end{remark}

\begin{proof}[Proof of Theorem \ref{thm:homology_from_vector_reps}]
    Let $M\in \Rea^{V\times V}$ be the matrix defined by
    \[
       M_{u,v}=\begin{cases}
           P(u)\cdot \sum_{u'\neq u} f(u')P(u') & \text{if } u=v,\\
           -\sqrt{f(u)f(v)}P(u)\cdot P(v) & \text{if } u\neq v
       \end{cases}
    \]
    for all $u,v\in V$.
    We may think of $f:V\to\Rea_{\geq 0}$ as a weight function on $V$, and consider the vertex-weighted Laplacian matrix $\simlapw{G}{f}$. For all $x\in \Rea^V$, we have
    \begin{align}\label{eq:abm}
        x^T \simlapw{G}{f} x 
        &= \sum_{\{u,v\}\in E} \left(\sqrt{f(v)}x_u-\sqrt{f(u)}x_v\right)^2
   \nonumber  \\   
       &\leq \frac{1}{2} \sum_{(u,v)\in V\times V} \left(\sqrt{f(v)}x_u-\sqrt{f(u)}x_v\right)^2 P(u)\cdot P(v)
   \nonumber  \\
    &=\sum_{u\in V} \left(P(u)\cdot\sum_{v\neq u} f(v) P(v)\right) x_u^2 
   \nonumber  \\ &- \sum_{\substack{(u,v)\in V\times V\\ u\neq v}}\left(\sqrt{f(u)f(v)}P(u)\cdot P(v)\right) x_u x_v = x^T M x.
        \end{align}

    Let $Q=M-\simlapw{G}{f}$. By \eqref{eq:abm}, $Q$ is positive semi-definite.
    Let $\tilde{P}\in \Rea^{\ell\times |V|}$ be the matrix whose columns are the vectors $\{\sqrt{f(v)} P(v)\}_{v\in V}$. Then $\tilde{P}^T \tilde{P}$ is a positive semi-definite matrix satisfying
    \[
    (\tilde{P}^T \tilde{P})_{u,v}= \sqrt{f(u)f(v)}P(u)\cdot P(v)
    \]
    for all $u,v\in V$. Let $T\in \Rea^{V\times V}$ be the diagonal matrix with elements $T_{u,u}=P(u)\cdot \sum_{v\in V}f(v) P(v)$. Note that $T=M+\tilde{P}^T \tilde{P}$, so that $T=\simlapw{G}{f}+(\tilde{P}^T \tilde{P}+Q)$. Let $|V|=n$. Since $\tilde{P}^T \tilde{P}+Q$ is positive semi-definite, we have 
    $\eiglargest{n}{\tilde{P}^T \tilde{P}+Q)}\geq 0$.
    Therefore, by Lemma \ref{lemma:weyl}, we obtain for all $1\leq i\leq n$, 
    \[
    \eiglargest{i}{T} \geq \eiglargest{i}{\simlapw{G}{f}}+\eiglargest{n}{\tilde{P}^T \tilde{P}+Q)} \geq \eiglargest{i}{\simlapw{G}{f}}.
    \]
    Since $T$ is a diagonal matrix, its eigenvalues are $\{P(u)\cdot\sum_{v\in V} f(v) P(v)\}_{u\in V}$. Thus, by Theorem \ref{thm:bettinumbers_indepedence_complex} (recall that the eigenvalues of $\simlapw{G}{f}$ are the same as those of $\glapw{G}{f}$), we obtain
    \begin{align*}
        \dim(\tilde{H}_k(I(G);\Rea))&\leq
        \left|\left\{ I\in\binom{[n]}{k+1} :\, \sum_{i\in I} \eiglargest{i}{\simlapw{G}{f}}\geq \sum_{v\in V} f(v)\right\}\right|
       \\
       &\leq 
        \left|\left\{ I\in\binom{[n]}{k+1} :\, \sum_{i\in I} \eiglargest{i}{T}\geq \sum_{v\in V} f(v)\right\}\right|
       \\
        &=
        \left|\left\{ I\in \binom{V}{k+1} :\, \sum_{u\in I} P(u)\cdot \sum_{v\in V}f(v) P(v)  \geq \sum_{v\in V}f(v) \right\}\right|.
    \end{align*}
    
\end{proof}

\begin{remark}[Remarks.]
\begin{enumerate}[leftmargin=*]
    \item
Let $P:V\to \Rea^{\ell}$ be a vector representation of $G$. We say that a function $f:V\to \Rea_{\geq 0}$ satisfying $\sum_{v\in V} f(v) P(v)\cdot P(u)\leq 1$ for all $u\in V$ is \emph{dually dominating} for $P$.
By duality of linear programming, $|P|$ is the maximum of $\sum_{v\in V} f(v)$ over all dually dominating functions.

Let $f:V\to\Rea_{\geq 0}$ be a dually dominating function for $P$ satisfying $|P|=\sum_{v\in V} f(v)$. Let $k\leq \lceil |P| \rceil -2$.
Then, for all $I\in\binom{V}{k+1}$, we have 
\[
\sum_{u\in I} P(u)\cdot \sum_{v\in V} f(v) P(v) \leq k+1 \leq \lceil |P| \rceil -1< |P|=\sum_{v\in V} f(v).
\]
By Theorem \ref{thm:homology_from_vector_reps}, we obtain
$
\tilde{H}_k(I(G);\Rea)= 0.
$
Hence, $\eta(I(G))\geq |P|$. 
Since this holds for all vector representations $P$ of $G$, we recover the bound $\eta(I(G))\geq \Gamma(G)$.

\item The proof of Theorem \ref{thm:homology_from_vector_reps}  closely follows the arguments in \cite{aharoni2005eigenvalues}, in particular the proof of \cite[Claim 4.2]{aharoni2005eigenvalues}. The argument in \cite{aharoni2005eigenvalues} relies on the application of Theorem \ref{thm:ABM_connectivity} to a graph $G'$ obtained from $G$ by replacing each vertex $v$ in $G$ by an independent set (of size depending on the value of $f(v)$, where $f$ is a dually dominating function for $P$).
One main difference in our proof is that the use of weighted Laplacian matrices allows us to eliminate the need for this ``duplication of vertices" argument (and indeed this was our motivation for the study of vertex-weighted Laplacians).
\end{enumerate}
\end{remark}

The following result can be obtained as a consequence of Theorem \ref{thm:homology_from_vector_reps} (using the vector representation introduced in \cite[Theorem 4.2]{zewi2012vector}). We include here, however, a simple direct proof using Theorem \ref{thm:bettinumbers_indepedence_complex}.

\begin{proposition}\label{prop:homology_from_packing}
    Let $S\subset V$ be a neighborhood packing in $G$. Let $\deg(S)=\sum_{v\in S}\deg(v)$. Then, for all $k\geq 0$
    \[
    \dim(\tilde{H}_k(I(G);\Rea))\leq \sum_{m=|S|}^{k+1}\binom{\deg(S)}{m}\binom{|V|-\deg(S)}{k+1-m}.
    \]
\end{proposition}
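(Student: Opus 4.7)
The plan is to apply Theorem \ref{thm:bettinumbers_indepedence_complex} with the vertex weight function $w:V\to\Rea_{\geq 0}$ defined by $w(v)=1$ if $v\in S$ and $w(v)=0$ otherwise, so that $\sum_{v\in V}w(v)=|S|$. The key step will be to show that the spectrum of $\glapw{G}{w}$ consists of the eigenvalue $1$ with multiplicity $\deg(S)$ and the eigenvalue $0$ with multiplicity $|V|-\deg(S)$, i.e., this weight choice diagonalizes the Laplacian in the simplest possible way.

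To verify the spectrum, I will work with $\simlapw{G}{w}$, which has the same eigenvalues as $\glapw{G}{w}$. I claim $\simlapw{G}{w}$ is already a diagonal matrix. An off-diagonal entry $-\sqrt{w(u)w(v)}$ for $\{u,v\}\in E$ can be nonzero only when both $u,v\in S$, but this is impossible because $S$ is a neighborhood packing, so in particular it is an independent set. For the diagonal, $(\simlapw{G}{w})_{u,u}=|N_G(u)\cap S|$; this vanishes for $u\in S$ (again by independence), and for $u\notin S$ the disjointness of the closed neighborhoods of the vertices of $S$ forces $|N_G(u)\cap S|\leq 1$, with equality precisely when $u$ lies in the open neighborhood of some $s\in S$. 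Since these open neighborhoods are pairwise disjoint, their union has size $\sum_{s\in S}\deg(s)=\deg(S)$, which gives the claimed multiplicities.

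Setting $n=|V|$ and applying Theorem \ref{thm:bettinumbers_indepedence_complex}, the sum $\sum_{i\in I}\eiglargest{i}{\glapw{G}{w}}$ is simply the number of indices in $I$ falling in $\{1,\ldots,\deg(S)\}$, so
\[
\dim(\tilde{H}_k(I(G);\Rea))\leq \left|\left\{ I\in\binom{[n]}{k+1} :\, |I\cap\{1,\ldots,\deg(S)\}|\geq |S|\right\}\right|.
\]
Stratifying by $m=|I\cap\{1,\ldots,\deg(S)\}|$ and using $|S|\leq m\leq k+1$ gives the right-hand side $\sum_{m=|S|}^{k+1}\binom{\deg(S)}{m}\binom{|V|-\deg(S)}{k+1-m}$, as required. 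There is no substantial obstacle: the proof is short once one recognizes that the neighborhood packing hypothesis is exactly what is needed to simultaneously kill all off-diagonal entries (via independence of $S$) and normalize the nonzero diagonal entries to $1$ (via disjointness of neighborhoods), so that Theorem \ref{thm:bettinumbers_indepedence_complex} reduces directly to a binomial count.
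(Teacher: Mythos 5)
Your proof is correct and follows essentially the same route as the paper's: the same weight function $w=\mathbf{1}_S$, the same observation that the neighborhood packing hypothesis makes $\simlapw{G}{w}$ diagonal with entries in $\{0,1\}$ and exactly $\deg(S)$ ones, and the same application of Theorem \ref{thm:bettinumbers_indepedence_complex} reducing the bound to a binomial count.
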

    In particular, letting $S$ be a maximal neighborhood packing, we obtain $\tilde{H}_k(I(G);\Rea)=0$ for $k\leq \rho(G)-2$, recovering the bound $\eta(I(G))\geq \rho(G)$. Note that if $G$ is a matching of size $r$, then, letting $S$ consist of one vertex from each edge in $G$, we obtain from Proposition \ref{prop:homology_from_packing} the tight bounds $\tilde{H}_{k}(I(G);\Rea)=0$ for $k\leq r-2$ and $\dim(\tilde{H}_{r-1}(I(G);\Rea))\leq 1$.

\begin{proof}[Proof of Proposition \ref{prop:homology_from_packing}]
Let $|V|=n$.
   Let $w:V\to \Rea_{\geq 0}$ be defined by
   \[
   w(v)=\begin{cases}
       1 & \text{if } v\in S,\\
       0 & \text{otherwise.}
   \end{cases}
   \]
   Since no two vertices in $S$ are adjacent in $G$, $\simlapw{G}{w}$ is a diagonal matrix. Moreover, since every vertex in $V$ is adjacent to at most one vertex in $S$, we have
   \[
   \simlapw{G}{w}_{u,u}=\begin{cases}
                1 & \text{if } u\in \bigcup_{v\in S}N_G(v),\\
                0 & \text{otherwise,}
              \end{cases} 
   \]
   for all $u\in V$. Therefore, the eigenvalues of $\simlapw{G}{w}$ are $1$ with multiplicity $\deg(S)$ and $0$ with multiplicity $|V|-\deg(S)$. Since $\sum_{v\in V} w(v)= |S|$, we obtain by Theorem \ref{thm:bettinumbers_indepedence_complex},
    \begin{align*}
        \dim(\tilde{H}_k(I(G);\Rea))&\leq
        \left|\left\{ I\in\binom{[n]}{k+1} :\, \sum_{i\in I} \eiglargest{i}{\simlapw{G}{w}}\geq |S|\right\}\right|
       \\
       &= 
        \left|\left\{ I\in\binom{[n]}{k+1} :\, 
        |I \cap \{1,\ldots,\deg(S)\}|\geq |S|
        \right\}\right|
       \\
        &=
        \sum_{m=|S|}^{k+1} \binom{\deg(S)}{m}\binom{|V|-\deg(S)}{k+1-m}.
    \end{align*}
\end{proof}

\section{An additional application}\label{sec:additional}

The following inequality due to Merris follows immediately by applying Ger{\v s}gorin's circle theorem (Theorem \ref{thm:gersgorin}) to the $k$-th additive compound of a matrix $M$:

\begin{theorem}[Merris \cite{merris1994inequality}]\label{thm:merris_gersgorin}
    Let $M\in\mathbb{C}^{n\times n}$ be an hermitian matrix. Then
    \[
    \maxksum{k}{M} \leq \max_{\sigma\in\binom{[n]}{k}} \left( \sum_{i\in\sigma}M_{i,i}+\sum_{\substack{i\in\sigma,\\j\notin\sigma}}|M_{i,j}|\right).
    \]
\end{theorem}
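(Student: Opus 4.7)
The plan is to apply Gershgorin's circle theorem (Theorem \ref{thm:gersgorin}) directly to the $k$-th additive compound $\sumwedge{k}{M}$. By Theorem \ref{thm:additive_compound_eigenvalues}, the spectrum of $\sumwedge{k}{M}$ consists of all $k$-fold sums of eigenvalues of $M$, so its largest eigenvalue is exactly $\maxksum{k}{M}$. Thus, if Gershgorin yields that every eigenvalue $\lambda$ of $\sumwedge{k}{M}$ satisfies $\lambda \leq \sum_{i\in\sigma}M_{i,i}+\sum_{i\in\sigma,\,j\notin\sigma}|M_{i,j}|$ for some $\sigma\in\binom{[n]}{k}$, then the theorem follows by specializing to $\lambda=\maxksum{k}{M}$ and taking the maximum over $\sigma$.

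First I would verify that $\sumwedge{k}{M}$ is Hermitian when $M$ is. Using the explicit description in Theorem \ref{thm:additive_compound_matrix}, if $\sigma,\tau\in\binom{[n]}{k}$ differ in one element with $\sigma\setminus\tau=\{i\}$ and $\tau\setminus\sigma=\{j\}$, then $\epsilon(\sigma,\tau)=\epsilon(\tau,\sigma)$ because both count the elements of $\sigma\cap\tau$ lying strictly between $i$ and $j$. Hence
\[
(\sumwedge{k}{M})_{\tau,\sigma}=(-1)^{\epsilon(\tau,\sigma)}M_{j,i}=\overline{(-1)^{\epsilon(\sigma,\tau)}M_{i,j}}=\overline{(\sumwedge{k}{M})_{\sigma,\tau}},
\]
while the diagonal entries $\sum_{i\in\sigma}M_{i,i}$ are real. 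In particular the eigenvalues of $\sumwedge{k}{M}$ are real, so the Gershgorin bound $|\lambda-(\sumwedge{k}{M})_{\sigma,\sigma}|\leq\sum_{\tau\neq\sigma}|(\sumwedge{k}{M})_{\tau,\sigma}|$ yields a genuine upper bound $\lambda\leq (\sumwedge{k}{M})_{\sigma,\sigma}+\sum_{\tau\neq\sigma}|(\sumwedge{k}{M})_{\tau,\sigma}|$ for some $\sigma$.

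Next I would identify the column sum with the quantity on the right-hand side of the claim. By Theorem \ref{thm:additive_compound_matrix}, the off-diagonal entries in column $\sigma$ are supported on those $\tau$ obtained from $\sigma$ by replacing a single element $i\in\sigma$ by some $j\notin\sigma$, in which case $|(\sumwedge{k}{M})_{\tau,\sigma}|=|M_{j,i}|=|M_{i,j}|$ (using $M=M^*$). Summing over all such pairs $(i,j)$ gives $\sum_{\tau\neq\sigma}|(\sumwedge{k}{M})_{\tau,\sigma}|=\sum_{i\in\sigma,\,j\notin\sigma}|M_{i,j}|$, and combined with the identification $(\sumwedge{k}{M})_{\sigma,\sigma}=\sum_{i\in\sigma}M_{i,i}$ this produces precisely the bound we want.

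There is no real obstacle in this argument: once one recognizes the compound matrix $\sumwedge{k}{M}$ as the natural object whose diagonal records the sum $\sum_{i\in\sigma}M_{i,i}$ and whose sparsity pattern only connects $k$-subsets differing in one element, Gershgorin applied columnwise gives exactly the stated inequality. The only minor point requiring care is the Hermiticity of $\sumwedge{k}{M}$, which ensures the eigenvalues are real so that the Gershgorin disk bound translates into a one-sided upper bound on $\maxksum{k}{M}$.
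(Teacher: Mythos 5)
Your proof is correct and follows exactly the route the paper indicates: apply Ger\v{s}gorin's theorem to $\sumwedge{k}{M}$, identify its diagonal entry at $\sigma$ with $\sum_{i\in\sigma}M_{i,i}$, and identify the off-diagonal column sum with $\sum_{i\in\sigma,\,j\notin\sigma}|M_{i,j}|$. The paper states the result as an immediate consequence without writing out the details; your check of Hermiticity (so the Ger\v{s}gorin disk gives a real, one-sided bound) and your accounting of the sparsity pattern of $\sumwedge{k}{M}$ are exactly the details one would want filled in.
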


Recall that the \emph{adjacency matrix} of a graph $G=(V,E)$ is the $|V|\times|V|$ matrix $A(G)$ defined by
\[
A(G)_{u,v}=\begin{cases} 1 & \text{ if } \{u,v\}\in E,\\
    0 & \text{ otherwise.}
    \end{cases}
\]
By applying Theorem \ref{thm:merris_gersgorin} to the Laplacian and adjacency matrices of a graph $G$, we obtain:

\begin{proposition}
    Let $G=(V,E)$ be a graph, and let $1\leq k\leq |V|$. Then,
    \[
    \maxksum{k}{L(G)}\leq 2\cdot \max_{\sigma\in\binom{V}{k}} \left|\left\{ e\in E:\, e\cap \sigma\neq \emptyset\right\}\right|,
    \]
    and
    \[
    \maxksum{k}{A(G)}\leq \max_{\sigma\in\binom{V}{k}}  \left|\left\{ e\in E:\, |e\cap \sigma|=1 \right\}\right|.
    \]
\end{proposition}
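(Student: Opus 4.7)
The plan is to apply Theorem \ref{thm:merris_gersgorin} directly to $M = L(G)$ and $M = A(G)$, and then interpret the two sums appearing on the right-hand side in terms of the edge structure of $G$. For any $\sigma \in \binom{V}{k}$, let me write $E_\sigma^{\text{in}} = \{e \in E : e \subseteq \sigma\}$ for the edges with both endpoints in $\sigma$, and $E_\sigma^{\text{cross}} = \{e \in E : |e \cap \sigma| = 1\}$ for the edges with exactly one endpoint in $\sigma$. The set of edges touching $\sigma$ then decomposes as $E_\sigma^{\text{in}} \cupdot E_\sigma^{\text{cross}}$.

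For the Laplacian case, I would use $L(G)_{v,v} = \deg(v)$ and $|L(G)_{u,v}| = \mathbf{1}_{\{u,v\} \in E}$ for $u \neq v$. Since each edge in $E_\sigma^{\text{in}}$ contributes to the degree of two vertices of $\sigma$ and each edge in $E_\sigma^{\text{cross}}$ contributes to the degree of one vertex of $\sigma$, I get $\sum_{i \in \sigma} L(G)_{i,i} = 2|E_\sigma^{\text{in}}| + |E_\sigma^{\text{cross}}|$. On the other hand, $\sum_{i \in \sigma,\, j \notin \sigma} |L(G)_{i,j}|$ simply counts the edges with one endpoint in $\sigma$ and the other outside, giving $|E_\sigma^{\text{cross}}|$. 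Summing the two yields $2(|E_\sigma^{\text{in}}| + |E_\sigma^{\text{cross}}|) = 2|\{e \in E : e \cap \sigma \neq \emptyset\}|$, and Theorem \ref{thm:merris_gersgorin} then gives the first bound.

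For the adjacency matrix, the argument is even simpler: $A(G)_{i,i} = 0$, so the diagonal contribution vanishes, and $\sum_{i \in \sigma,\, j \notin \sigma} |A(G)_{i,j}| = |E_\sigma^{\text{cross}}| = |\{e \in E : |e \cap \sigma| = 1\}|$, which is exactly the right-hand side of the claimed inequality after taking the max over $\sigma$.

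There is no real obstacle here; both bounds are bookkeeping consequences of Theorem \ref{thm:merris_gersgorin}, and the only care needed is in correctly accounting for the double-counting of edges in $E_\sigma^{\text{in}}$ when summing diagonal entries of $L(G)$.
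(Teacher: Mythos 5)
Your proof is correct and follows exactly the same route as the paper: apply Theorem \ref{thm:merris_gersgorin} to $L(G)$ and $A(G)$, then account for how edges inside $\sigma$ contribute twice to the diagonal sum while crossing edges contribute once to the diagonal sum and once to the off-diagonal sum. The bookkeeping identities you write are identical to the paper's, just phrased with the notation $E_\sigma^{\text{in}}$ and $E_\sigma^{\text{cross}}$.
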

\begin{proof}
    By Theorem \ref{thm:merris_gersgorin}, we have
\[
    \maxksum{k}{L(G)} \leq \max_{\sigma\in\binom{V}{k}} \left( \sum_{v\in\sigma}\deg(v) + \sum_{\substack{v\in\sigma,u\notin \sigma:\\ \{u,v\}\in E}} 1\right).
    \]
Note that for all $\sigma\in\binom{V}{k}$ we have
\[
    \sum_{v\in\sigma}\deg(v)=2|\{e\in E:\, e\subset\sigma\}|+|\{e\in E:\, |e\cap\sigma|=1\}| 
\]
and
\[
\sum_{\substack{v\in\sigma,u\notin \sigma:\\ \{u,v\}\in E}} 1 =|\{e\in E:\, |e\cap\sigma|=1\}|. 
\]
Therefore, we obtain
\begin{align*}
 \maxksum{k}{L(G)}&\leq
  \max_{\sigma\in \binom{V}{k}}\left(2|\{e\in E:\, e\subset \sigma\}|+2|\{e\in E:\, |e\cap \sigma|=1\}|\right)
 \\
   &= 2\cdot \max_{\sigma\in\binom{V}{k}} \left|\left\{ e\in E:\, e\cap \sigma\neq \emptyset\right\}\right|.
\end{align*}
Similarly, by Theorem \ref{thm:merris_gersgorin},
\[
    \maxksum{k}{A(G)} \leq \max_{\sigma\in\binom{V}{k}} \sum_{\substack{v\in\sigma,u\notin \sigma:\\ \{u,v\}\in E}} 1 = \max_{\sigma\in\binom{V}{k}}  \left|\left\{ e\in E:\, |e\cap \sigma|=1 \right\}\right|.
    \]
\end{proof}


\section*{Acknowledgments} 
I thank the anonymous reviewers for their helpful remarks.

\bibliographystyle{amsplain}


\begin{dajauthors}
\begin{authorinfo}[pgom]
  Alan Lew\\
  Department of Mathematical Sciences\\
  Carnegie Mellon University\\
  Pittsburgh, PA 15213, USA\\
  alanlew\imageat{}andrew\imagedot{}cmu\imagedot{}edu
\end{authorinfo}
\end{dajauthors}

\end{document}